\documentclass[11pt]{article}
\usepackage{amsthm}
\usepackage{indentfirst}
\usepackage{subfigure}
\usepackage{enumitem}
\usepackage{amssymb}
\usepackage{enumerate}
\usepackage{enumitem}
\usepackage{amsmath,amsthm,marvosym,wasysym,mathdots}
\usepackage{bbm}
\usepackage[round]{natbib}
\usepackage{color}
\usepackage{setspace}
\usepackage{epsfig}
\usepackage{hyperref}
\usepackage{amsmath}
\usepackage{graphicx}
\usepackage{ae,aecompl}
\usepackage[multiple]{footmisc}

\hypersetup{%
  colorlinks=false,% hyperlinks will be black,
  pdfborder = {0 0 0.5 [3 0]}
}

\usepackage{multirow}
\usepackage[left=1in,right=1in,top=1in,bottom=1in]{geometry}

\usepackage[margin=1cm]{caption}

\setcounter{MaxMatrixCols}{10}

 %
 %

 % number of claims
 % seller
 %seller's yield spread
 %yield spread

\newcommand{\ee}{\mathbf{e}}

\newtheorem{theorem}{Theorem}
\newtheorem*{theorem*}{Theorem}

\newtheorem{corollary}[theorem]{Corollary}

\newtheorem{lemma}[theorem]{Lemma}

\newtheorem{proposition}[theorem]{Proposition}
\newtheorem{remark}[theorem]{Remark}

%\newenvironment{proof}[1][Proof]{\textbf{#1.} }{\ \rule{0.5em}{0.5em}}

%\numberwithin{equation}{section}
%\numberwithin{theorem}{section}

 % positive integers
 % rationals
 % integers
 % reals
 % product of n copies of reals
\def\P{{\mathbb P}} % probability
\newcommand{\EE}{{\mathord{I\kern -.33em E}}}
 % expectation
 % Filtration

\def\1{1{\hskip -3.3 pt}\hbox{I}}

\newcommand{\ind}{1\hspace{-2.1mm}{1}} %Indicator Function

\providecommand{\varitem}{}
\makeatletter

\makeatother

\numberwithin{equation}{section}
\numberwithin{theorem}{section}

\begin{document}
\title{Timing Options for a Startup with Early Termination and Competition Risks}
\author{Tim Leung\thanks{ \small{Department of Applied Mathematics, University of Washington, Seattle, WA 98195. E-mail:
\mbox{timleung@uw.edu}.} Corresponding author. } \and Zongxi Li\thanks{\small{Operations Research \& Financial Engineering Department, Princeton University,
Princeton, NJ 08544. E-mail:
\mbox{zongxil@princeton.edu}. } } }\date{\today} \maketitle

\begin{abstract}
This paper analyzes the timing options embedded in a startup firm, and the associated market entry and exit timing decisions under the exogenous risks of early termination and competitor's entry. Our valuation approach leads to the analytical study of a non-standard perpetual American installment option nested with an optimal sequential stopping problem. Explicit formulas are derived for the firm's value functions. Analytically and numerically, we show that early termination risk leads to earlier voluntary entry or exit, and the threat of competition has a non-trivial effect on the firm's entry and abandonment strategies.

\end{abstract}

  {\textbf{Keywords:}\,  startup, market entry, project abandonment, early termination, competition risk}

  {\textbf{JEL Classification:}\, C41, G11, G13, M13}

  {\textbf{Mathematics Subject Classification (2010):}\, 60G40, 62L15, 91G20,  91G80 \\

\newpage
\section{Introduction}

Over the past two decades, there have been a number of highly successful startup companies. Nevertheless, in general firms in their early stages require significant capital investment for research and development (R\&D) and  face tremendous amount risk that may force them to abandon the projects. Many  startups may fail before they even sell a single unit of products. The risk of early termination  may be due to demand uncertainty,  technological challenges,   regulatory changes, and other causes.

Another major risk factor for a startup is the emergence of a competitor.  This is commonly seen in markets with very few major  players, for example,   Lyft's entry to  New York City to compete with Uber,\footnote{http://www.wsj.com/articles/lyft-revs-up-in-new-york-city-1448038672} and the launch of Apple music to rival Pandora and Spotify.\footnote{http://www.wsj.com/articles/apple-to-announce-new-music-services-1433183201}  In effect, new competition may reduce the existing firm's market share and thus revenue stream, as noted by \cite{Petersen2001}.  In turn, this influences how long the firm can stay in the market.

In this paper, we propose a theoretical real option approach to better understand  a startup firm's strategies to voluntarily abandon a project or enter the market in face of  early termination  and  competition risks.  The arrivals of early termination and competition are modeled by exogenous independent exponential  random variables. We analyze their combined impact on the firm's optimal timing to enter or exit.   Our valuation approach is a non-standard perpetual American installment option nested with an optimal sequential stopping problem.

Our main results are the analytic solutions for the firm's sequential  timing problems.  We also investigate the impact of the  early termination risk and the competitor's potential entry on the firm's value and the associated  timing to enter and exit the market. Among our findings, the firm has a lower  entry threshold  and higher  cancellation threshold when   the early termination risk rises. On the other hand, competition risk may increase or decrease  the  firm's abandonment level, depending on the impact on the new cash flow. We provide the necessary and sufficient condition for both cases.  Numerical results are provided to illustrate these effects.

In the literature, \cite{Pennings1997} study   the empirical option value of an R\&D   project, and model the arrivals of new information that impact cash flows  by exponential 	random variables.    \citet*{Lukas2016} propose an entrepreneurial venture financing model without competition risk by combining compound option pricing with sequential non-cooperative contracting.   \cite{Kort2015} apply a game-theoretic approach to study  the incumbent's over or under investment  problem accounting for  the threat of entry.  Also, \cite{Restrepo2015} study a real option approach for    sequential investments    in the presence of expropriation risk.     \cite{Davis2004} study  the valuation of a  venture capital  as an installment option,  while  \cite{Ciurlia2009} and \cite{Kimura2009} study the pricing of  American installment put and call options.  Our model can be viewed as  a  perpetual American-style installment compound option with three stopping times. The incubation period in our model is similar to the time-to-build in infrastructure investments, which also involves  a timing option to start operation; see \citet*{Eric2015}.   \cite{Kwon2010} studies the firm's decision to discard or invest in   an aging technology  with a declining profit stream with demand uncertainty. The theory of optimal stopping is applied to study project management with uncertain completion in \cite{ChiChan}, and to develop strategies for  drug discovery in \cite{Zhao2009754}, among others. Sequential  stopping problems  also arise in other applications, such as participating a government subsidized program  (\cite{kuno2013}), and  trading under mean reversion (\cite{LeungLibook,LeungLi2015OU}).

This paper is organized as follows. In Section \ref{sect-formulate}, we introduce the firm's investment timing problems, and  present our formulation.  Then, we present  the solutions of the optimal abandonment timing problem in Section \ref{sect-exit}, and the optimal entry timing problem in Section \ref{sect-entry}.  In Section \ref{sect-sen}, we examine the effects of the early termination risk and competitor's entry on the firm's strategies. Section \ref{sect-conclude} concludes. All proofs are included   in the Appendix.

\section{Problem Formulation}\label{sect-formulate}
In the background, we fix a complete probability space $(\Omega, \cal{F}, \mathbb{P})$. The firm's  valuation is conducted under the  historical probability measure  $\P$, with a subjective discount rate  $\rho>0$, as in  \cite{McDonald86}. We consider a startup firm  that can enter a targeted market and generate  a stochastic cash flow  until the firm's voluntary  abandonment time.  We assume that the firm  acts as a price taker in this market.  The cash flow is driven by a  stochastic factor $X$, satisfying
\begin{equation}
dX_t=\mu X_t\, dt + \sigma X_t\, dB_{t},
\end{equation}
with drift $\mu>0$ and volatility $\sigma>0$. For instance, we can interpret $X$ as the prevailing market price of the goods/services sold by the firm.  Let  $\mathbb{F} = (\mathcal{F}_t)_{t\ge 0}$ be the filtration generated by $X$, and  $\cal{T}$ be the set of all stopping times with respect to $\mathbb{F}$.

The firm seeks to maximize its net present value (NPV) by selecting the best time to enter the market. Prior to the entry, as depicted by Figure \ref{fig:sample1},  the firm can cancel the project at time $\tau_{c}$ (bottom path), or is forced to abort the project due to early termination risk at time $\zeta_{1}$ (middle path) during this incubation period. If either event occurs, the firm stops operation and generates no future cash flow. In the third scenario (top path), the firm avoids early termination and opts to enter  the market at time $\tau_{e}$.

Figure  \ref{fig:sample2} illustrates the scenarios after market entry. Given that the firm has entered the market, the firm  can abandon  the project before or after the competitor's arrival   time $\zeta_{2}$.  In these two possible situations, the firm's abandonment decision with or without competition, represented by the thresholds $\widetilde{a}$ and $a$ respectively,  can be different.  For instance, the firm may have a higher abandonment threshold   if there is competition in the market, i.e. $\widetilde{a}>a$ (top and bottom paths). As such, the  firm will stay in   the market even when  the   price $X$  is below $\widetilde{a}$ but above $a$   before the competitor arrives.  In this case, as soon as the competitor enters suddenly at time $\zeta_2$,  the firm's abandonment level  switches to the higher level $\widetilde{a}$ above the current value of $X$, forcing the firm to abandon immediately (middle path).  The random times involved in our model  are summarized in Table \ref{table:random times}.\\

\begin{table}[h]
\centering
\begin{tabular}{|l|l|l}
\hline
Random times & Description  \\
\hline\hline
$\tau_{c}$ & Firm's cancellation time to forgo market entry \\
$\tau_{e}$ & Firm's entry time \\
$\tau_{a}$ & Firm's abandonment time after market entry without new competition \\
$\widetilde{\tau}_{a}$ &  Firm's abandonment time after competitor's entry \\
$\zeta_{1}$ & Exogenous early termination time during the incubation period   \\
$\zeta_{2}$ & Competitor's exogenous arrival time after the firm's entry   \\
\hline
\end{tabular}
\caption{\small{Summary of the random times in our model. The early termination time $\zeta_1$ and the competitor's arrival time $\zeta_2$ are exogenous and assumed to be independent exponential random variables.}}
\label{table:random times}
\end{table}

\clearpage

\begin{figure}[h]
\centering
\subfigure[Pre-entry period.]{\label{fig:sample1} \includegraphics[width=3.1in,trim=0.2cm 0.4cm 1cm 0.8cm, clip=true]{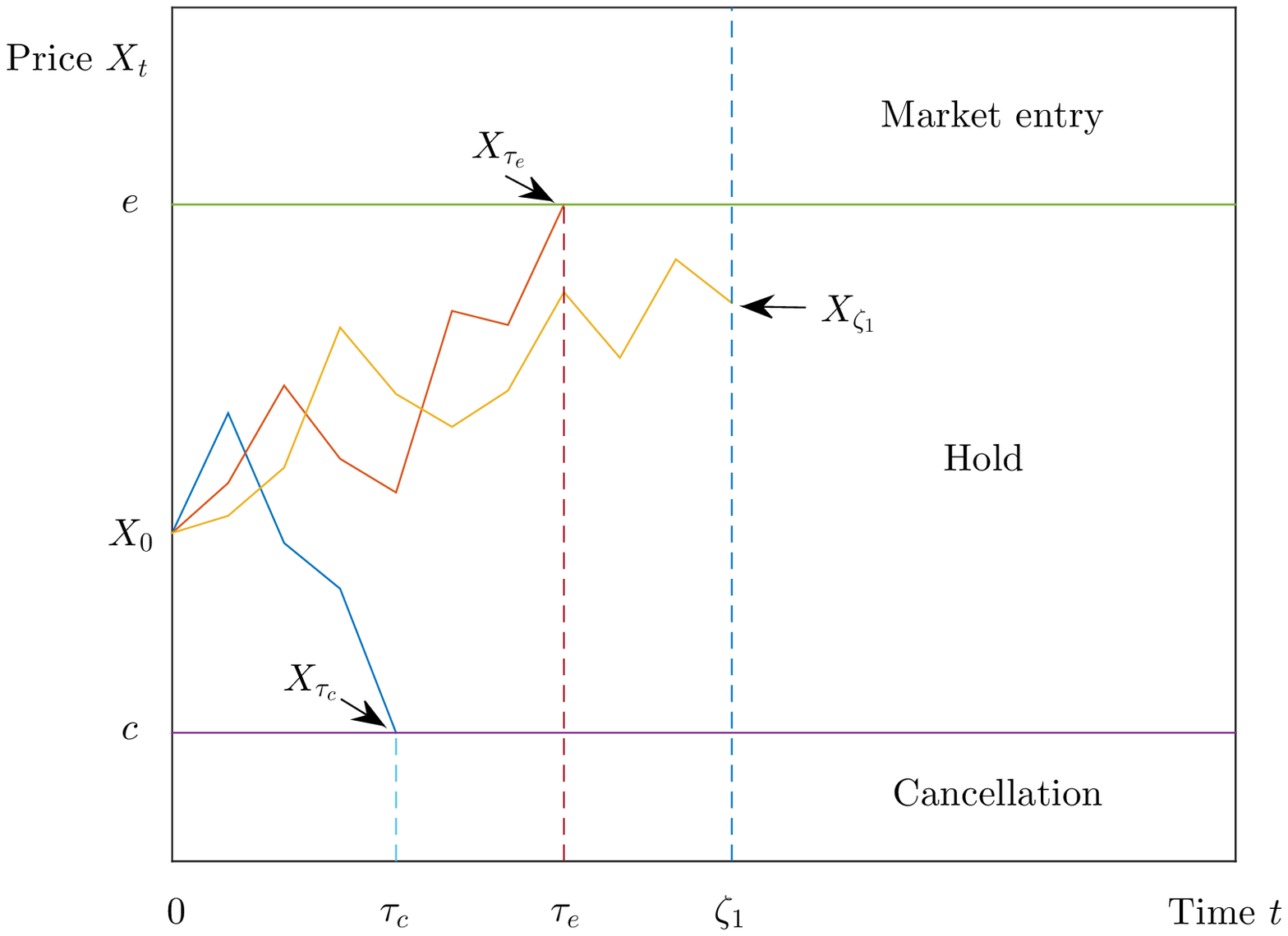} }
%\hspace{3in}
\subfigure[Post-entry period.]{\label{fig:sample2} \includegraphics[width=3.1in,trim=0.2cm 0.4cm 1cm 0.8cm, clip=true]{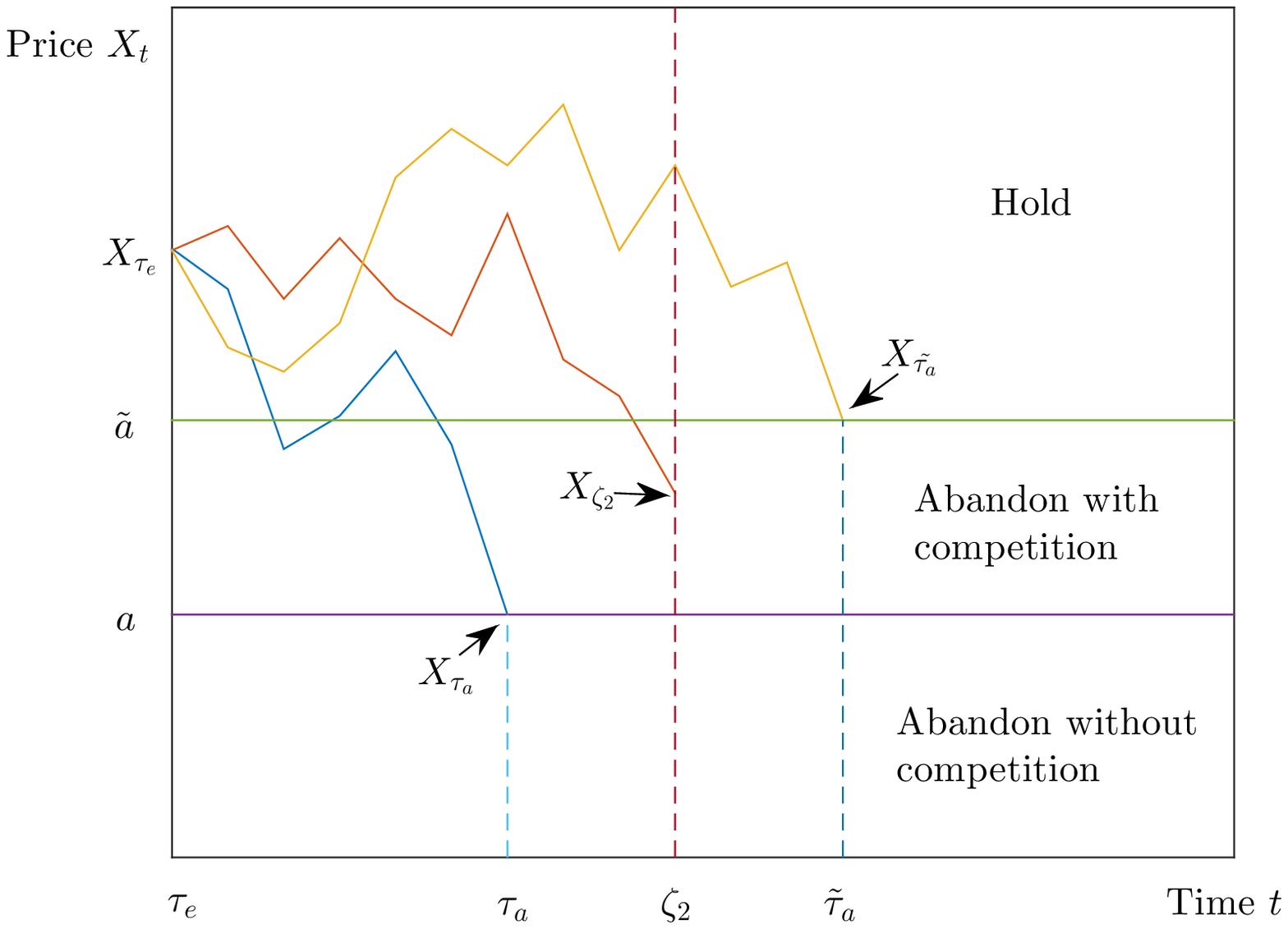} }
\caption{All possible scenarios for the firm in the pre-entry and post-entry periods.}
\label{fig:All possible scenarios}
\end{figure}

To formulate the firm's timing problems, we first  consider the scenario with a competition in the market post-entry period. In this case, we specify   the   profit stream by $(g\left(X_t\right))_{t\ge 0}$. Our model assumes a  linear function: $g(x)=\alpha x-\beta$ for all $x>0$, where $0<\alpha\le 1$, $\beta>0$. We can interpret the fraction $\alpha$ as a reduction in revenue, and $\beta$ as the fixed   cost when the firm faces new competition. In addition, let $\widetilde{\tau}_{a}$ be the abandonment time after the competitor's entry. To maximize its net present value (NPV), the firm solves the optimal stopping problem
\begin{equation}
\label{expression_tildeV}
\widetilde{V}(x) = \sup_{\substack{\widetilde{\tau}_{a}\in\cal{T}}}\mathbb{E}_{x}\left[\int_{0}^{\widetilde{\tau}_{a}}\ee^{-\rho t}g\left(X_{t}\right)\,dt \right],
\end{equation}
where  $\mathbb{E}_{x}[\,  {\cdot} \, ]\equiv\mathbb{E}[\,  {\cdot} \, |X_0=x]$.

When the firm first enters the market,  it   generates a profit stream $(f\left(X_t\right))_{t\ge 0}$, where we assume   $f(x)=x-K$ for all $x>0$,  where $K>0$ represents the fixed cost. Then the firm  operates till the abandonment time  $\tau_{a}$ when facing no new competition.   If the competitor arrives at $\zeta_{2}$ (before the firm's abandonment), then   the firm's NPV is exactly $\widetilde{V}(x)$ (see \eqref{expression_tildeV}). In other words, the firm will continue to generate the profit stream $g$ from time $\zeta_2$ on, after having accumulated  the profit stream $f$  up to  $\zeta_{2}$. We assume that $\zeta_{2}$ is an exponential random variable with parameter $\lambda_{2}$, independent of price process $(X_t)_{t\ge 0}$. Therefore, before new competition arrives, the firm's maximized NPV is
\begin{align}
V(x) &= \sup_{\substack{\tau_{a}\in\cal{T}}}\mathbb{E}_{x}\left[\int_{0}^{\tau_{a}\wedge\zeta_{2}}\ee^{-\rho t}f\left(X_{t}\right)\,dt + \ind_{\{\tau_{a}>\zeta_{2}\}}\ee^{-\rho\zeta_{2}}\widetilde{V}(X_{\zeta_{2}}) \,\right] \nonumber  \\
&=\sup_{\substack{\tau_{a}\in\cal{T}}}\mathbb{E}_{x}\left[\int_{0}^{\tau_{a}}\ee^{-(\rho+\lambda_{2})t}\left(f\left(X_{t}\right)+\lambda_{2}\widetilde{V}
 \left(X_{t}\right)\right) \,dt \,\right],\label{expression_V}
\end{align}which follows from the distribution of $\zeta_2$ and law of iterated expectations. The   derivation is provided in the Appendix \ref{app1}. As we can see, the  value function in \eqref{expression_tildeV} becomes an input to the firm's value function $V$ in \eqref{expression_V}.

During the incubation period,   the firm has to pay the operating cost $c(X_{t})$ over time. For simplicity, we let  $c(x)=ax+b$ for all $x>0$, with constants $a,b>0$. The firm may  choose to cancel the project at time $\tau_{c}$, but could  end up terminating it early at the exogenous   time $\zeta_{1}$, which is an exponential random variable with parameter $\lambda_{1}$, independent of price process $(X_t)_{t\ge 0}$ and $\zeta_{2}$. If the firm avoids cancellation and termination, then it will   enter the market at time $\tau_{e}$. Therefore, the firm's pre-entry value function is given by \begin{align}
\psi(x)
&=\sup_{\substack{\tau_{e},\tau_{c}\in\cal{T}}}\mathbb{E}_{x}\left[- \int_{0}^{\tau_{e}\wedge\tau_{c}\wedge\zeta_{1}}\ee^{-\rho t}c(X_{t})\,dt
+\ind_{\{\tau_{c}\wedge\zeta_{1}>\tau_{e}\}}\ee^{-\rho\tau_{e}}V(X_{\tau_{e}})\, \right]  \notag\\
&= \sup_{\substack{\tau_{e},\tau_{c}\in\cal{T}}}\mathbb{E}_{x}\left[- \int_{0}^{\tau_{e}\wedge\tau_{c}}\ee^{-(\rho+\lambda_{1})t}c(X_{t})\,dt
+\ind_{\{\tau_{c}>\tau_{e}\}}\ee^{-(\rho+\lambda_{1})\tau_{e}}V(X_{\tau_{e}})\, \right].\label{expression_psi}
\end{align}
The last step \eqref{expression_psi} is derived similarly to  \eqref{expression_V}; see Appendix \ref{app1}.

In summary, we seek to solve for the value functions $\{\widetilde{V}, V, \psi\}$ and the associated optimal timing strategies $\{\widetilde{\tau}_a^*, \tau_a^*, \tau_c^*, \tau_e^*\}$. As seen in \eqref{expression_tildeV}-\eqref{expression_psi}, the firm's post-entry strategy and new competition can affect its pre-entry decisions.

\section{Optimal Abandonment Timing Problem}\label{sect-exit}

We first analyze the firm's exit problem represented by $\widetilde{V}(x)$   in \eqref{expression_tildeV}. If $\rho\leq\mu$, then we have
\begin{equation}
\widetilde{V}(x) \geq \mathbb{E}_{x}\left[\int_{0}^{+\infty}\ee^{-\rho t}g\left(X_{t}\right)\,dt \right] \geq \alpha\int_{0}^{+\infty}\ee^{-\rho t}\mathbb{E}_{x}\left[X_{t}\right]\,dt - \frac{\beta}{\rho} = +\infty.
\end{equation}
The last equality holds due to   $\mathbb{E}_{x}(X_{t})=x\ee^{\mu t}$ and $\mu \ge \rho$.  As a result,   it is optimal for the firm to never exit the market if  $\rho\leq\mu$, with an infinite NPV.

Therefore, in  the rest of the paper, we consider the non-trivial case $\rho>\mu$.   To facilitate our presentation, we define the constants \begin{equation}
\label{general}
h_{i}(\lambda)=\frac{1}{\sigma^{2}}\left(\frac{\sigma^{2}}{2}-\mu+(-1)^{i}\,\sqrt[]{(\frac{\sigma^{2}}{2}-\mu)^{2}+2\sigma^{2}(\rho+\lambda)}\right), \quad  i\in\{1,2\}\,,  \lambda\geq 0.
\end{equation}

\begin{proposition}
\label{Proposition 1}
After the competitor's arrival, the  firm's value function defined in \eqref{expression_tildeV} is given by
\begin{equation}
\label{solution tilde V}
\widetilde{V}(x)=
\begin{cases}
-\frac{\alpha}{k_{1}(\rho-\mu)} (\widetilde{a}^{*})^{1-k_{1}}x^{k_{1}} + \frac{\alpha}{\rho-\mu}x-\frac{\beta}{\rho} &\mbox{if $x> \widetilde{a}^{*}$,}\\
0 &\mbox{if $0<x\leq\widetilde{a}^{*}$,}
\end{cases}
\end{equation}
where $k_{1}=h_1(0)<0$ (see \eqref{general}), and the optimal threshold is explicitly given by
\begin{align}
\label{astar}
\widetilde{a}^{*}&=\frac{\rho-\mu}{\rho}\frac{k_{1}}{k_{1}-1}\frac{\beta}{\alpha}>0.
\end{align}
\end{proposition}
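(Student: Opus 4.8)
The plan is to treat \eqref{expression_tildeV} as a standard perpetual optimal stopping problem with running reward $g$ and to solve it through the associated free-boundary (variational inequality) problem, closing the argument with a verification theorem. Writing $\mathcal{L}=\frac{\sigma^{2}}{2}x^{2}\partial_{xx}+\mu x\,\partial_{x}$ for the infinitesimal generator of $X$, the candidate value function should satisfy $\mathcal{L}\widetilde{V}-\rho\widetilde{V}+g=0$ on the continuation region and $\widetilde{V}=0$ on the stopping region, together with the global constraints $\widetilde{V}\ge 0$ and $\mathcal{L}\widetilde{V}-\rho\widetilde{V}+g\le 0$. Guided by the economics (operating is unattractive when the price $x$ is low), I would conjecture a single-threshold rule $\widetilde{\tau}_a^{*}=\inf\{t\ge 0:X_t\le\widetilde{a}^{*}\}$, so that the continuation region is $\{x>\widetilde{a}^{*}\}$ and the stopping region is $\{0<x\le\widetilde{a}^{*}\}$.

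On the continuation region I would solve the linear ODE $\frac{\sigma^{2}}{2}x^{2}\widetilde{V}''+\mu x\widetilde{V}'-\rho\widetilde{V}+\alpha x-\beta=0$. Its homogeneous part is an Euler equation whose indicial roots are exactly $k_{1}=h_1(0)<0$ and $k_{2}=h_2(0)>1$ from \eqref{general}, while the linear ansatz $Ax+B$ yields the particular solution $\frac{\alpha}{\rho-\mu}x-\frac{\beta}{\rho}$ (here $\rho>\mu$ makes $A$ well defined). Thus the general solution is $C_{1}x^{k_{1}}+C_{2}x^{k_{2}}+\frac{\alpha}{\rho-\mu}x-\frac{\beta}{\rho}$. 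I would pin down the constants by three conditions: the requirement that $\widetilde{V}$ not grow faster than linearly as $x\to\infty$ forces $C_{2}=0$ (since $k_{2}>1$ would dominate the linear particular term), and value matching $\widetilde{V}(\widetilde{a}^{*})=0$ together with smooth pasting $\widetilde{V}'(\widetilde{a}^{*})=0$ at the free boundary. Smooth pasting gives $C_{1}=-\frac{\alpha}{k_{1}(\rho-\mu)}(\widetilde{a}^{*})^{1-k_{1}}$, and substituting into value matching collapses to a single equation for $\widetilde{a}^{*}$, solving to the stated $\widetilde{a}^{*}=\frac{\rho-\mu}{\rho}\frac{k_{1}}{k_{1}-1}\frac{\beta}{\alpha}$; positivity follows since $k_{1}<0$ makes $k_{1}/(k_{1}-1)\in(0,1)$.

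The substantive work is the verification that this candidate coincides with the value function. I would apply It\^o's formula to $\ee^{-\rho t}\widetilde{V}(X_t)$ along an arbitrary stopping time $\tau$, use the variational inequality $\mathcal{L}\widetilde{V}-\rho\widetilde{V}\le -g$ to bound the drift term, and take expectations to obtain $\mathbb{E}_{x}[\int_{0}^{\tau}\ee^{-\rho t}g(X_t)\,dt]\le\widetilde{V}(x)$; equality holds for $\widetilde{\tau}_a^{*}$ because the inequality is an equality on $\{x>\widetilde{a}^{*}\}$ and $\widetilde{V}$ vanishes at the boundary. Two inequalities must be checked to make the variational inequality valid globally: that $\widetilde{V}\ge 0$ on the continuation region, and that $\mathcal{L}\widetilde{V}-\rho\widetilde{V}+g\le 0$ on the stopping region, where $\widetilde{V}\equiv 0$ reduces this to $\alpha x-\beta\le 0$, i.e. $\widetilde{a}^{*}\le\beta/\alpha$, which holds precisely because $\frac{\rho-\mu}{\rho}<1$ and $\frac{k_{1}}{k_{1}-1}<1$.

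I expect the main obstacle to be the integrability and transversality estimates in the verification, rather than the algebra. Because both $g$ and $\widetilde{V}$ grow linearly in $x$, I must justify that the stochastic integral is a true martingale (via the finite moments of geometric Brownian motion, after a localization argument) and, crucially, that $\lim_{t\to\infty}\mathbb{E}_{x}[\ee^{-\rho(t\wedge\tau)}\widetilde{V}(X_{t\wedge\tau})]=0$. This transversality estimate is exactly where the standing assumption $\rho>\mu$ is indispensable: since $\widetilde{V}(x)=O(x)$ and $\mathbb{E}_{x}[X_t]=x\,\ee^{\mu t}$, the expectation is $O(x\,\ee^{-(\rho-\mu)t})\to 0$. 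Handling the local-martingale-to-martingale passage and this limit carefully is the delicate part; the remainder is routine.
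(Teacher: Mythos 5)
Your proposal is correct, and its constructive half coincides exactly with the paper's own proof: the paper likewise posits the one-sided threshold rule, solves the Euler ODE $\tfrac{\sigma^{2}}{2}x^{2}\widetilde{V}''+\mu x\widetilde{V}'-\rho\widetilde{V}+g=0$ on $\{x>\widetilde{a}^{*}\}$ with particular solution $\tfrac{\alpha x}{\rho-\mu}-\tfrac{\beta}{\rho}$, discards the $x^{k_{2}}$ homogeneous solution via the condition $\lim_{x\to\infty}\widetilde{V}'(x)<\infty$, and pins down the remaining constant and $\widetilde{a}^{*}$ from value matching and smooth pasting, arriving at \eqref{solution tilde V} and \eqref{astar}. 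The difference is that the paper stops there: its appendix proof constructs the candidate and never verifies optimality. Your third and fourth paragraphs --- the It\^o/variational-inequality verification, the check that the inequality $\mathcal{L}\widetilde{V}-\rho\widetilde{V}+g\le 0$ holds on the stopping region (equivalently $\widetilde{a}^{*}\le\beta/\alpha$, a fact the paper mentions only informally in the discussion after the proposition), the nonnegativity of $\widetilde{V}$ on the continuation region, the localization of the stochastic integral, and the transversality limit $\lim_{t\to\infty}\mathbb{E}_{x}[\ee^{-\rho t}\widetilde{V}(X_{t})]=0$ exploiting $\rho>\mu$ --- have no counterpart in the paper. So yours is the same construction made rigorous: what your route buys is an actual proof that the candidate equals the supremum in \eqref{expression_tildeV}, while what the paper's version buys is brevity, at the cost of being a guess-and-check argument without the check.
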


\vspace{10pt}

The firm's optimal timing to abandon is given by the first time the value function  $\widetilde{V}(X_t)$ reaches zero.  This occurs when $X_t =  \widetilde{a}^{*}$, so we have \begin{equation}
\label{stopping time tilde a}
\widetilde{\tau}_{a}^{*}=\inf\{t\geq 0: X_{t}\leq \widetilde{a}^{*}\}.
\end{equation}
In particular, the optimal abandonment threshold $\widetilde{a}^{*}$ is slightly smaller than the ratio ${\beta}/{\alpha}$, but note that   $g(x)<0$ for $x<\beta/\alpha$. That means that,  even if the firm  is currently incurring  a loss, it does not immediately abandon but will wait for the profit to improve in the future. When the price falls further to the lower level $\widetilde{a}^{*}$, then the firm will decide to abandon. The  explicit expression of $\widetilde{a}^{*}$ is amenable for sensitivity analysis, which we  summarize as follows.

\begin{corollary}
\label{corollary 1}
The optimal threshold $\widetilde{a}^{*}$ is increasing  in $\rho$, $\beta$, and decreasing  in $\mu$, $\sigma$, $\alpha$.
\end{corollary}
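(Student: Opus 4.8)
The plan is to read off the monotonicity directly from the closed form \eqref{astar}, treating it factor by factor. Write $\widetilde{a}^{*}=\frac{\beta}{\alpha}\,\Phi$ with $\Phi:=\frac{\rho-\mu}{\rho}\cdot\frac{k_{1}}{k_{1}-1}$. The dependence on $\beta$ and $\alpha$ is then immediate, since $\Phi$ contains neither: $\widetilde{a}^{*}$ is linear and increasing in $\beta$ and inversely proportional (hence decreasing) in $\alpha$. Everything else reduces to studying $\Phi$, which depends on $\rho,\mu,\sigma$ both explicitly and through $k_{1}=h_{1}(0)$, the negative root of $Q(k):=\frac{\sigma^{2}}{2}k^{2}+(\mu-\frac{\sigma^{2}}{2})k-\rho$.

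First I would record how $k_{1}$ moves with the parameters by implicit differentiation, $\partial_{\theta}k_{1}=-\partial_{\theta}Q/\partial_{k}Q$. At the root one has the algebraic identity $\partial_{k}Q(k_{1})=\sigma^{2}k_{1}+\mu-\frac{\sigma^{2}}{2}=-\sqrt{D}<0$, where $D:=(\frac{\sigma^{2}}{2}-\mu)^{2}+2\sigma^{2}\rho$. From $\partial_{\rho}Q=-1$, $\partial_{\mu}Q=k_{1}<0$, and $\partial_{\sigma^{2}}Q=\tfrac12 k_{1}(k_{1}-1)>0$ (note $k_{1}<0$), this gives $\partial_{\rho}k_{1}<0$, $\partial_{\mu}k_{1}<0$, $\partial_{\sigma^{2}}k_{1}>0$. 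I would also note that $f(k):=k/(k-1)$ has $f'(k)=-1/(k-1)^{2}<0$ and $f(k_{1})\in(0,1)$. For $\rho$ the two factors then cooperate: $\frac{\rho-\mu}{\rho}=1-\mu/\rho$ is positive and increasing in $\rho$, while $f(k_{1})$ is positive and increasing in $\rho$ (as $k_{1}$ decreases and $f$ is decreasing); a product of positive increasing functions is increasing, so $\widetilde{a}^{*}$ increases in $\rho$.

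For $\mu$ and $\sigma$ the two factors pull in opposite directions, so a direct product argument fails and I would collapse $\Phi$ into a single fraction. Since $k_{1}$ solves $Q=0$, i.e. $\rho=\frac{\sigma^{2}}{2}k_{1}(k_{1}-1)+\mu k_{1}$, one obtains the identity $\rho-\mu=(k_{1}-1)\bigl(\frac{\sigma^{2}}{2}k_{1}+\mu\bigr)$, which cancels the awkward factor $(k_{1}-1)$ and yields $\Phi=\frac{A}{A-\sigma^{2}/2}$ with $A:=\frac{\sigma^{2}}{2}k_{1}+\mu$. A brief squaring argument shows $A<0$ exactly when $\rho>\mu$, which holds here. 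Then $g(A):=A/(A-\sigma^{2}/2)$ satisfies $g'(A)=-\frac{\sigma^{2}/2}{(A-\sigma^{2}/2)^{2}}<0$, and for fixed $\sigma$ one computes $\partial_{\mu}A=\frac{\sigma^{2}}{2}\partial_{\mu}k_{1}+1=\frac{\sqrt{D}+\sigma^{2}/2-\mu}{2\sqrt{D}}>0$; the chain rule gives $\partial_{\mu}\Phi<0$, so $\widetilde{a}^{*}$ decreases in $\mu$.

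The main obstacle is $\sigma$, because $s:=\sigma^{2}$ enters $A$, the denominator $A-s/2$, and $k_{1}$ all at once, so the sign of $\partial_{s}\Phi$ is not visible from $\partial_{s}A$ alone. Here I would write $\Phi=N/M$ with $N=s/2+\mu-\sqrt{D}$ and $M=\mu-s/2-\sqrt{D}$ (so $N=2A$, $M=2(A-s/2)<0$), and exploit $N-M=s$ to get $\Phi=1+s/M$. Then $\partial_{s}\Phi<0$ is equivalent to $M-sM'<0$, and substituting $M'=-\frac12-\frac{(s/2-\mu)+2\rho}{2\sqrt{D}}$, clearing $2\sqrt{D}>0$, and squaring (both sides are positive because $\rho>\mu$) reduces the inequality to $4s^{2}\rho(\rho-\mu)>0$, which holds. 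Hence $\Phi$ is decreasing in $s=\sigma^{2}$, and therefore $\widetilde{a}^{*}$ is decreasing in $\sigma$, completing the five claims.
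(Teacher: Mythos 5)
Your proof is correct, and while its skeleton (read off $\beta$ and $\alpha$ directly, then study $\Phi=\frac{\rho-\mu}{\rho}\cdot\frac{k_1}{k_1-1}$) matches the paper's, your treatment of $\mu$ and $\sigma$ takes a genuinely different route. For $\mu$, the paper works by brute force: it differentiates the closed form of $\widetilde{a}^{*}$ directly and checks the sign of the resulting expression using the bound $\rho\delta>|(\rho-\mu)(\mu-\frac{\sigma^{2}}{2})|$ with $\delta=\sqrt{(\frac{\sigma^{2}}{2}-\mu)^{2}+2\sigma^{2}\rho}$; you instead use the identity $\rho-\mu=(k_1-1)\bigl(\frac{\sigma^{2}}{2}k_1+\mu\bigr)$, which follows from the defining quadratic, to collapse $\Phi$ into $A/(A-\sigma^{2}/2)$ and then apply a one-variable chain rule --- this is cleaner and less error-prone, and your implicit-differentiation bookkeeping $\partial_{\theta}k_1=-\partial_{\theta}Q/Q'(k_1)$ with $Q'(k_1)=-\sqrt{D}$ is tidier than the paper's explicit derivative formulas. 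For $\sigma$, however, you made the problem harder than it is, and harder than the paper does: since the factor $\frac{\rho-\mu}{\rho}$ is $\sigma$-free, the claim follows immediately from two facts you had already established, namely $\partial_{\sigma^{2}}k_1>0$ and that $k\mapsto k/(k-1)$ is decreasing --- this is exactly the paper's argument (it computes $\partial k_1/\partial\sigma>0$ and concludes). Your detour through $\Phi=1+s/M$ and the squaring reduction to $4s^{2}\rho(\rho-\mu)>0$ is correct (the algebra checks out), but it is only necessary because you committed to the collapsed form $A/(A-s/2)$, in which $s=\sigma^{2}$ enters every ingredient at once; reverting to the original factorization for this one parameter would have saved the work. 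For $\rho$, both you and the paper argue the same way, as a product of positive increasing factors.
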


\vspace{10pt}

From Proposition \ref{Proposition 1}, we see  that $\widetilde{V}(x)$ is increasing and  becomes nearly linear when $x$ is large since the term $x^{k_1}$  ($k_1<0$)  diminishes to zero. In fact, it's dominated by the linear part, even if $x$ is close to $\widetilde{a}^{*}$ (see the proof of Corollary \ref{corollary 2} in Appendix \ref{proof}).

\begin{corollary}
\label{corollary 2}
$\widetilde{V}(x)$ is    increasing convex in $x$. In addition, when $x$ is sufficient large, $\widetilde{V}(x)$ is increasing  in $\mu$, $\alpha$, and decreasing in $\rho$, $\beta$.
\end{corollary}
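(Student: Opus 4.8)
The plan is to work directly from the explicit formula \eqref{solution tilde V}, handling the region $0 < x \le \widetilde{a}^*$ (where $\widetilde{V}\equiv 0$) and the region $x > \widetilde{a}^*$ separately, and then gluing them at the threshold. This is cleaner than returning to the variational formulation \eqref{expression_tildeV}, since Proposition \ref{Proposition 1} already gives the closed form.

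For the monotonicity and convexity in $x$: on $(0,\widetilde{a}^*]$ the function is identically zero, hence trivially nondecreasing and convex. On $(\widetilde{a}^*,\infty)$ I would write $\widetilde{V}(x) = C x^{k_1} + \frac{\alpha}{\rho-\mu}x - \frac{\beta}{\rho}$ with $C := -\frac{\alpha}{k_1(\rho-\mu)}(\widetilde{a}^*)^{1-k_1}$, and first record that $C>0$, since $\alpha>0$, $\rho>\mu$, and $k_1<0$. Differentiating twice gives $\widetilde{V}''(x) = C k_1(k_1-1)x^{k_1-2}$, which is strictly positive because $k_1<0$ forces $k_1(k_1-1)>0$; this yields strict convexity on $(\widetilde{a}^*,\infty)$. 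For monotonicity I would verify the smooth-fit identity $\widetilde{V}'(\widetilde{a}^*)=C k_1(\widetilde{a}^*)^{k_1-1}+\frac{\alpha}{\rho-\mu}=0$, which follows by substituting the definition of $C$; combined with $\widetilde{V}''>0$ this shows $\widetilde{V}'(x)>0$ for all $x>\widetilde{a}^*$. Finally, since $\widetilde{V}$ is $C^1$ across $\widetilde{a}^*$ with matching derivative $0$ and is convex on each piece, it is globally increasing and convex.

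For the parameter sensitivities at large $x$, I would exploit the decomposition of $\widetilde{V}$ into the power term $C x^{k_1}$ and the affine term $\frac{\alpha}{\rho-\mu}x - \frac{\beta}{\rho}$. Differentiating with respect to any of $\alpha,\mu,\rho,\beta$, the contribution of the affine term is either $O(x)$ or constant, while the contribution of the power term is a finite linear combination of $x^{k_1}$ and $x^{k_1}\ln x$ (the logarithm appearing only for $\mu,\rho$, through $\partial k_1/\partial\mu$ and $\partial k_1/\partial\rho$). Since $k_1<0$, both $x^{k_1}\to 0$ and $x^{k_1}\ln x\to 0$ as $x\to\infty$, so for $x$ sufficiently large the sign of each partial derivative is dictated by the affine part. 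Reading off the signs, $\partial_\alpha\bigl(\frac{\alpha}{\rho-\mu}x\bigr)=\frac{x}{\rho-\mu}>0$ and $\partial_\mu\bigl(\frac{\alpha}{\rho-\mu}x\bigr)=\frac{\alpha x}{(\rho-\mu)^2}>0$ give monotonicity in $\alpha,\mu$; $\partial_\rho\bigl(\frac{\alpha}{\rho-\mu}x-\frac{\beta}{\rho}\bigr)=-\frac{\alpha x}{(\rho-\mu)^2}+\frac{\beta}{\rho^2}$ is negative for large $x$; and $\partial_\beta\bigl(-\frac{\beta}{\rho}\bigr)=-\frac{1}{\rho}<0$.

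The main obstacle I anticipate is the sensitivity step rather than the convexity step: one must justify that every contribution coming from the power term $C x^{k_1}$ — including the $x^{k_1}\ln x$ terms generated by the dependence of the exponent $k_1$ on $\mu$ and $\rho$ — is negligible compared with the growing affine term. This reduces to the elementary but essential fact that $x^{k_1}$ and $x^{k_1}\ln x$ tend to $0$ as $x\to\infty$ when $k_1<0$, which is precisely what ``sufficiently large $x$'' encodes; making the threshold explicit, and confirming that $\widetilde{a}^*$ remains finite so that we are genuinely on the upper branch, completes the argument.
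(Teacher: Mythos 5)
Your proposal is correct and follows essentially the same route as the paper: differentiate the closed form on $(\widetilde{a}^*,\infty)$ to get positivity of $\widetilde{V}'$ and $\widetilde{V}''$ (the paper's expression $\widetilde{V}'(x)=\frac{\alpha}{\rho-\mu}\bigl(1-(\widetilde{a}^*/x)^{1-k_1}\bigr)$ encodes exactly your smooth-fit-plus-convexity argument), then note that the $x^{k_1}$ term is negligible for large $x$ so the parameter sensitivities are read off the affine part. Your treatment is in fact slightly more careful than the paper's, which glosses over both the gluing at $\widetilde{a}^*$ and the $x^{k_1}\ln x$ contributions arising from $\partial k_1/\partial\mu$ and $\partial k_1/\partial\rho$ that you explicitly control.
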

\vspace{10pt}

Next we turn to the optimal abandonment problem   $V(x)$. The explicit solution of $\widetilde{V}(x)$ will become the input to the problem for $V(x)$. We derive the optimal threshold-type strategy. Let   $a^{*}$ denote  the abandonment price level before new  competition arrives. New competition will bring about a reduction in revenue for the firm, described by the fraction $\alpha$. If the revenue impact is low, then  the firm will choose to stay longer in the market and exit in a lower threshold, and thus  we  expect $a^{*}>\widetilde{a}^{*}$. In contrast, when the firm's revenue is significantly reduced, then it is more likely that the firm will experience a negative cash flow, and may be forced to leave the market earlier. This means that the post-competition threshold    $\widetilde{a}^{*}$ is higher than the pre-competition level  $a^{*}$. As is intuitive, the aforementioned  first and  second cases correspond to, respectively,  the large and small  values of   $\alpha$. As we show below, the   two cases are separated by the  critical value  of $\alpha_{0}$, defined by
\begin{equation}
\label{alpha0}
\alpha_{0}=\left(1-\frac{\rho p_{1}(k_1-1)(\rho+\lambda_2-\mu)}{k_{1}(p_{1}-1)(\rho-\mu)(\rho+\lambda_{2})}(1-\frac{K}{\beta})\right)^{-1},
%\alpha_{0}=\frac{k_{1}(p_{1}-1)(r-\mu)(r+\lambda_{2})}{(p_{1}-k_{1})r(r+\lambda_{2}-\mu)+k_{1}\mu\lambda_{2}(1-p_{1})-rp_{1}(k_{1}-1)(r+\lambda_{2}-\mu)\frac{K}{\beta}},
\end{equation}
where $p_1=h_1(\lambda_2)$ as in \eqref{general}.

\begin{proposition}
\label{Proposition 2}
The firm's pre-competition  optimal abandonment  problem \eqref{expression_V} is solved as follows:

(I) If $K \geq \beta$ and $\alpha_{0}\leq\alpha\leq 1$, then there exists  a unique  $a^{*}\in [\widetilde{a}^{*},+\infty)$ such that
\begin{equation}
\label{a1}
\frac{\alpha(k_1-p_1)}{k_1(\rho-\mu)}(\widetilde{a}^{*})^{1-k_{1}}(a^{*})^{k_{1}}+\frac{\rho+\alpha \lambda_{2}-\mu}{\rho+\lambda_{2}-\mu}\frac{p_{1}-1}{\rho-\mu}a^{*}-\frac{p_{1}(\beta \lambda_{2}+\rho K)}{\rho(\rho+\lambda_{2})}=0.
\end{equation}
The value function $V(x)$ is given by
\begin{equation}
\label{solution V 1}
V(x)=
\begin{cases}
C_{1}x^{p_{1}}-\frac{\alpha}{k_{1}(\rho-\mu)}(\widetilde{a}^{*})^{1-k_{1}}x^{k_{1}}+\frac{\rho+\alpha \lambda_{2}-\mu}{\rho+\lambda_{2}-\mu}\frac{1}{\rho-\mu}x-\frac{\beta \lambda_{2}+\rho K}{\rho(\rho+\lambda_{2})} &\mbox{if $x>a^{*}$,}\\
0 &\mbox{if $0<x\leq a^{*}$.}\\
\end{cases}
\end{equation}

(II) If  $K < \beta$, or $K \geq \beta$ and $0\leq\alpha<\alpha_{0}$, then there exists a unique $a^{*}\in (0,\widetilde{a}^{*})$ such that
\begin{equation}
\label{a2}
\frac{(p_{1}-k_{1})\rho(\rho+\lambda_{2}-\mu)+k_{1}\mu\lambda_{2}(1-p_{1})}{(1-k_{1})\rho(\rho+\lambda_{2})(\rho+\lambda_{2}-\mu)}\beta(\widetilde{a}^{*})^{-p_2}(a^{*})^{p_{2}}
+\frac{p_{1}-1}{\rho+\lambda_{2}-\mu}a^{*}-\frac{p_{1}K}{\rho+\lambda_{2}}=0.
\end{equation}
The value function $V(x)$ is given by
\begin{equation}
\label{solution V 2}
V(x)=
\begin{cases}
C_{2}x^{p_{1}}-\frac{\alpha}{k_{1}(\rho-\mu)}(\widetilde{a}^{*})^{1-k_{1}}x^{k_{1}}+\frac{\rho+\alpha \lambda_{2}-\mu}{\rho+\lambda_{2}-\mu}\frac{1}{\rho-\mu}x-\frac{\beta \lambda_{2}+\rho K}{\rho(\rho+\lambda_{2})} &\mbox{if $x>\widetilde{a}^{*}$,}\\
C_{3}x^{p_{1}}+C_{4}x^{p_{2}}+\frac{1}{\rho+\lambda_{2}-\mu}x-\frac{K}{\rho+\lambda_{2}} &\mbox{if $a^{*}<x\leq\widetilde{a}^{*}$,}\\
0 &\mbox{if $0<x\leq a^{*}$.}\\
\end{cases}
\end{equation}
In both cases (I) and (II), the firm's  optimal abandonment time without competition is given by \begin{equation}
\label{stopping time a}
\tau_{a}^{*}=\inf\{t\geq 0: X_{t}\leq a^{*}\}.
\end{equation}

\noindent The coefficients in above expressions are given by
\begin{align}
\label{C1}
C_{1}&=\frac{-v_{1}(a^{*})}{(a^{*})^{p_{1}}}, \\
\label{C3}
C_{2}&=\frac{(p_{1}-k_{1})\rho(\rho+\lambda_{2}-\mu)+k_{1}\mu\lambda_{2}(1-p_{1})}{(k_{1}-1)\rho(\rho+\lambda_{2})(\rho+\lambda_{2}-\mu)}
\frac{((\widetilde{a}^{*})^{p_2-p_1}-(a^{*})^{p_2-p_1})\beta}{(p_{2}-p_{1})(\widetilde{a}^{*})^{p_{1}}} \notag\\
&+\frac{v_{2}(\widetilde{a}^{*})-v_{1}(\widetilde{a}^{*})}{(\widetilde{a}^{*})^{p_{1}}}-\frac{v_{2}(a^{*})}{(a^{*})^{p_{1}}},
\\
\label{C5}
C_{3}&=\frac{1}{1-(a^{*})^{p_1-p_2}(\widetilde{a}^{*})^{p_2-p_1}}\left(C_2+\frac{(\widetilde{a}^{*})^{p_2-p_1}}{(a^{*})^{p_2}}v_2({a^{*})}-\frac{v_{2}(\widetilde{a}^{*})-v_{1}(\widetilde{a}^{*})}{(\widetilde{a}^{*})^{p_{1}}}\right), \\
\label{C6}
C_{4}&=-\frac{C_{3}(a^{*})^{p_1}+v_2({a^{*}})}{(a^{*})^{p_2}},
\\
%\label{C6}
%C_{6}&=\frac{(p_{1}-k_{1})r(r+\lambda_{2}-\mu)+k_{1}\mu\lambda_{2}(1-p_{1})}{(k_{1}-1)r(r+\lambda_{2})(r+\lambda_{2}-\mu)}
%\frac{\beta}{(p_{1}-p_{2})(\widetilde{a}^{*})^{p_{2}}},
%\\
%\label{C5}
%C_{5}&=-\frac{v_{2}(a^{*})+C_{6}(a^{*})^{p_{2}}}{(a^{*})^{p_{1}}}, \\
%\label{C3}
%C_{3}&=\frac{C_{5}(\widetilde{a}^{*})^{p_{1}}+C_{6}(\widetilde{a}^{*})^{p_{2}}+v_{2}(\widetilde{a}^{*})-v_{1}(\widetilde{a}^{*})}{(\widetilde{a}^{*})^{p_{1}}},
%\\
v_{1}(x)&=-\frac{\alpha}{k_{1}(\rho-\mu)}(\widetilde{a}^{*})^{1-k_{1}}x^{k_{1}}+\frac{\rho+\alpha \lambda_{2}-\mu}{\rho+\lambda_{2}-\mu}\frac{1}{\rho-\mu}x-\frac{\beta \lambda_{2}+\rho K}{\rho(\rho+\lambda_{2})},
\\
v_{2}(x)&=\frac{1}{\rho+\lambda_{2}-\mu}x-\frac{K}{\rho+\lambda_{2}}, \quad \text{ and } \quad  p_2=h_2(\lambda_2).
\end{align}

\end{proposition}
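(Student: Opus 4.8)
The plan is to solve the optimal stopping problem \eqref{expression_V} by the free-boundary (variational inequality) method and then confirm the candidate by a verification argument. Write $r:=\rho+\lambda_{2}$ and $\pi(x):=f(x)+\lambda_{2}\widetilde{V}(x)$, and let $\mathcal{L}=\tfrac{1}{2}\sigma^{2}x^{2}\partial_{xx}+\mu x\,\partial_{x}$ be the generator of $X$. In any continuation region $V$ must solve the inhomogeneous ODE $\mathcal{L}V-rV+\pi=0$, while $V=0$ on the abandonment region. The homogeneous equation $\mathcal{L}u-ru=0$ has the two power solutions $x^{p_{1}}$ and $x^{p_{2}}$ with $p_{1}=h_{1}(\lambda_{2})<0<1<p_{2}=h_{2}(\lambda_{2})$, the last inequality using $\rho>\mu$. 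Since $\widetilde{V}$ from Proposition \ref{Proposition 1} is given by two different expressions across $\widetilde{a}^{*}$, the forcing $\pi$, and hence the particular solution, changes form there: on $\{x>\widetilde{a}^{*}\}$ a direct computation (using that $x^{k_{1}}$ is \emph{not} homogeneous for $\mathcal{L}u-ru=0$, because $k_{1}=h_{1}(0)$ solves $\mathcal{L}u-\rho u=0$ instead) produces the particular solution $v_{1}$, whereas on $\{x\le\widetilde{a}^{*}\}$, where $\widetilde{V}\equiv 0$, it produces $v_{2}$.

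Next I would assemble the candidate according to whether the free boundary $a^{*}$ lies above or below $\widetilde{a}^{*}$. If $a^{*}\ge\widetilde{a}^{*}$ (case (I)) the whole continuation set $(a^{*},\infty)$ sits in $\{x>\widetilde{a}^{*}\}$, so $V=C_{1}x^{p_{1}}+v_{1}$; the $x^{p_{2}}$ term is discarded because $V$ grows at most linearly as $x\to\infty$ (a consequence of the linear growth of $\mathbb{E}_{x}$ of the integrand, which is finite since $r>\mu$). Value matching $V(a^{*})=0$ gives $C_{1}$ as in \eqref{C1}, and smooth pasting $V'(a^{*})=0$, after eliminating $C_{1}$, reduces to the scalar equation \eqref{a1}. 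If $a^{*}<\widetilde{a}^{*}$ (case (II)) the continuation set straddles $\widetilde{a}^{*}$, giving the three-piece form \eqref{solution V 2}: $C_{2}x^{p_{1}}+v_{1}$ on $(\widetilde{a}^{*},\infty)$ and $C_{3}x^{p_{1}}+C_{4}x^{p_{2}}+v_{2}$ on $(a^{*},\widetilde{a}^{*})$, where both powers are kept on the bounded interval. The four unknowns $C_{2},C_{3},C_{4},a^{*}$ are fixed by $C^{0}$- and $C^{1}$-matching at $\widetilde{a}^{*}$ (legitimate because $\widetilde{V}$, hence $\pi$, is $C^{1}$ there by the smooth pasting of Proposition \ref{Proposition 1}) together with value matching and smooth pasting at $a^{*}$; solving the resulting linear system gives $C_{2},C_{3},C_{4}$ as in \eqref{C3}--\eqref{C6} and the boundary equation \eqref{a2}. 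The dividing line between the two cases is found by setting $a^{*}=\widetilde{a}^{*}$ in \eqref{a1}, substituting \eqref{astar}, and solving for $\alpha$, which returns precisely the critical fraction $\alpha_{0}$ of \eqref{alpha0}; this is why case (I) is $K\ge\beta,\ \alpha\ge\alpha_{0}$ and case (II) is the complement.

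Before verification I would settle existence and uniqueness of $a^{*}$. In each case I would regard the left-hand side of \eqref{a1} (resp.\ \eqref{a2}) as a function of the candidate threshold on $[\widetilde{a}^{*},\infty)$ (resp.\ $(0,\widetilde{a}^{*})$), show it is strictly monotone, and locate a single sign change; the signs at the endpoints are controlled precisely by the case hypotheses on $K$, $\beta$, $\alpha$ through $\alpha_{0}$, so the intermediate value theorem delivers a unique root.

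The verification step is where the real work lies. Writing $W$ for the candidate, I would check that $W\in C^{1}(0,\infty)$ with $W''$ locally bounded (the only possible second-order kinks are at $a^{*}$ and $\widetilde{a}^{*}$, and the $C^{1}$-fit makes $W$ admissible for It\^o's formula), that $W\ge 0$ (via convexity together with $W(a^{*})=W'(a^{*})=0$ and the linear growth at infinity), and that $\mathcal{L}W-rW+\pi\le 0$ with equality on the continuation set. Applying It\^o to $\ee^{-rt}W(X_{t})$, using $\mathcal{L}W-rW\le-\pi$ and $W\ge 0$, and passing to the limit with the integrability bound that justified discarding $x^{p_{2}}$, yields $W(x)\ge\mathbb{E}_{x}[\int_{0}^{\tau}\ee^{-rt}\pi(X_{t})\,dt]$ for every $\tau$, hence $W\ge V$; evaluating along $\tau_{a}^{*}=\inf\{t:X_{t}\le a^{*}\}$ gives equality, so $W=V$. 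I expect the main obstacle to be the inequality $\mathcal{L}W-rW+\pi\le 0$ off the continuation set, which on the abandonment region $(0,a^{*})$ (where $W=0$) collapses to the pointwise sign condition $\pi=f+\lambda_{2}\widetilde{V}\le 0$: in case (II) this is $x-K\le 0$ for $x\le a^{*}<\widetilde{a}^{*}$, i.e.\ $a^{*}\le K$, and in case (I) it mixes the two forms of $\widetilde{V}$ across $\widetilde{a}^{*}$. Establishing this sign analysis is precisely where the case hypotheses and the explicit expression for $a^{*}$ enter, and it is the crux of the argument.
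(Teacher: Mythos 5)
Your proposal follows essentially the same route as the paper: the paper's proof is exactly the free-boundary construction you describe — ODE with forcing $f+\lambda_{2}\widetilde{V}$, particular solutions $v_{1}$ and $v_{2}$ on either side of $\widetilde{a}^{*}$, discarding $x^{p_{2}}$ at infinity, value matching and smooth pasting at $a^{*}$, plus $C^{0}$/$C^{1}$ matching at $\widetilde{a}^{*}$ in case (II) — and your existence/uniqueness plan is what the paper's two preparatory lemmas (Lemmas \ref{lemma 1} and \ref{lemma 2}) carry out. One caution on that step: in case (I) the function $H$ on the left of \eqref{a1} is \emph{not} globally monotone; it is unimodal ($H''<0$) with peak $a^{*}_{0}$, and the paper must show $a^{*}_{0}<\widetilde{a}^{*}$ for all $0<\alpha\le 1$ before monotonicity on $[\widetilde{a}^{*},\infty)$ and the endpoint sign $H(\widetilde{a}^{*})\gtrless 0$ (which is precisely what defines $\alpha_{0}$, and which also equals $M(\widetilde{a}^{*})$, unifying the two cases) can be invoked — your "show it is strictly monotone" claim is true on the relevant interval but needs this extra argument. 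Where you genuinely diverge is the verification step: the paper stops at the construction and never verifies that the candidate is the value function, so your Itô-based supersolution argument, including the observation that on the stopping region the variational inequality collapses to $f+\lambda_{2}\widetilde{V}\le 0$, is an addition rather than a reproduction. Be aware, though, that you leave that inequality — which you yourself call the crux — unestablished (in case (II) it reduces to $a^{*}\le K$, which does follow from \eqref{a2} since the $x^{p_{2}}$ coefficient there is negative; in case (I) it requires a genuine sign analysis of $x-K+\lambda_{2}\widetilde{V}(x)$ below $a^{*}$), so as written your proof is more ambitious in structure but incomplete at exactly the point where it exceeds the paper.
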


 \vspace{10pt}

Proposition \ref{Proposition 2}  gives  the exact conditions to determine  the ordering of the thresholds $a^*$ and $\widetilde{a}^{*}$.  As an example, let $\beta = \alpha K$ so that  the post-competition profit stream  $g(x)$ be  proportional to $f(x)$, i.e.  $g(x) = \alpha (x -  K)$. Proposition \ref{Proposition 2}  indicates that  $\alpha>\alpha_{0}$, and thus we have $a^{*}>\widetilde{a}^{*}$  in case (II).  In this example,  the firm will have a smaller profit stream after the competitor's arrival. To compensate the reduction in  profit, the firm  opts to stay longer in the market with the competitor and exit   at a lower threshold.

To determine which case applies, the simple first step is to check  whether $K$ is larger than $\beta$ or not. If $K<\beta$, it falls into case (II). Otherwise, we need to  check the values of the parameters   $\alpha$ and $\alpha_{0}$ (see \eqref{alpha0}). Note if $K=\beta$, then $\alpha_0=1$, which  can be seen directly from \eqref{alpha0}.   Next, Corollary \ref{corollary 3} describes the behavior of $\alpha_0$ and its sensitivity in the arrival rate of  competition $\lambda_2$.

\begin{corollary}
\label{corollary 3}
Suppose $K>\beta$, then we have $0<\alpha_0<1$. Moreover, $\alpha_0$ is strictly decreasing with respect to $\lambda_2$, and it admits a limit
\begin{equation}
\label{alpha0_limit}
\alpha^{\infty}_{0}:=\lim_{\lambda_{2}\to+\infty}\alpha_{0}=\left(1+\frac{\rho(1-k_1)}{k_1(\rho-\mu)}(1-\frac{K}{\beta})\right)^{-1} \in (0,1).
\end{equation}
\end{corollary}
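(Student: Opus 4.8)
The plan is to treat $\alpha_0$ as a function of $\lambda_2$ through the single dependence $p_1 = h_1(\lambda_2)$, and to reduce all three assertions to elementary sign and monotonicity bookkeeping. First I would rewrite \eqref{alpha0} as $\alpha_0 = (1 - \Phi(\lambda_2))^{-1}$, where
\[
\Phi(\lambda_2) = \frac{\rho\, p_1(k_1-1)(\rho+\lambda_2-\mu)}{k_1(p_1-1)(\rho-\mu)(\rho+\lambda_2)}\Bigl(1-\tfrac{K}{\beta}\Bigr).
\]
To obtain $0 < \alpha_0 < 1$ it then suffices to show $\Phi(\lambda_2) < 0$, since that forces $1 - \Phi > 1$ and places the reciprocal in $(0,1)$. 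I would verify the sign factor by factor: in the standing regime $\rho > \mu$ we have $k_1 = h_1(0) < 0$ and $p_1 = h_1(\lambda_2) < 0$, so $k_1 - 1 < 0$ and $p_1 - 1 < 0$, while $\rho + \lambda_2 - \mu > 0$ and $\rho + \lambda_2 > 0$. Hence the rational prefactor is a positive numerator over a positive denominator, and the hypothesis $K > \beta$ makes $1 - K/\beta < 0$; therefore $\Phi < 0$, as needed.

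For the monotonicity I would isolate the $\lambda_2$-dependence by writing $\Phi(\lambda_2) = C\, g(\lambda_2)$ with the $\lambda_2$-free constant
\[
C = \frac{\rho(k_1-1)}{k_1(\rho-\mu)}\Bigl(1-\tfrac{K}{\beta}\Bigr), \qquad
g(\lambda_2) = \frac{p_1}{p_1-1}\cdot\frac{\rho+\lambda_2-\mu}{\rho+\lambda_2}.
\]
The sign analysis above already shows $C < 0$. The key observation is that both factors of $g$ are positive and strictly increasing in $\lambda_2$: the second equals $1 - \mu/(\rho+\lambda_2)$, which is plainly increasing, and for the first the map $t \mapsto t/(t-1)$ has derivative $-1/(t-1)^2 < 0$ while $p_1 = h_1(\lambda_2)$ is strictly decreasing in $\lambda_2$, so the composition is increasing. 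Differentiating $h_1$ in \eqref{general} gives $h_1'(\lambda) = -1/\sqrt{(\sigma^2/2-\mu)^2 + 2\sigma^2(\rho+\lambda)} < 0$, which supplies this last fact. Thus $g' > 0$, and since $C < 0$ we get $\Phi' < 0$; finally $\alpha_0'(\lambda_2) = \Phi'/(1-\Phi)^2 < 0$, which is the claimed strict decrease.

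The limit is then immediate: as $\lambda_2 \to +\infty$ both factors of $g$ tend to $1$ (for the first because $p_1 \to -\infty$), so $g \to 1$ and $\Phi \to C$. Consequently $\alpha_0 \to (1-C)^{-1}$, and using $-(k_1-1) = 1-k_1$ this reciprocal equals exactly the stated $\alpha^{\infty}_{0}$. Since $C < 0$, the same argument as in the first step places the limit in $(0,1)$. The only ingredient that is not pure algebra is the monotonicity and limiting behavior of $p_1 = h_1(\lambda_2)$, which follow directly from the explicit form of $h_1$; accordingly I expect the only real pitfall to be the sign bookkeeping across the several factors, and I would lay out the sign of each factor explicitly to guard against an error there.
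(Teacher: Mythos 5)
Your proof is correct, and it takes a genuinely different route from the paper's. The paper rewrites $\alpha_0$ as a ratio of functions that are linear in $\lambda_2$, namely $\alpha_0 = \bigl(\lambda_2 S_1(p_1)+S_2(p_1)\bigr)/\bigl(\lambda_2 S_3(p_1)+S_4(p_1)\bigr)$, then pushes through a quotient-rule/chain-rule differentiation (tracking $\partial p_1/\partial\lambda_2$ through each $S_i$) and simplifies a lengthy expression down to a closed form for $\partial\alpha_0/\partial\lambda_2$, whose sign is read off under $K>\beta$; the limit is then obtained by first invoking monotone convergence (decreasing and bounded below) and then evaluating the ratio as $p_1\to-\infty$. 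You instead exploit the multiplicative structure: writing $\alpha_0=(1-\Phi)^{-1}$ with $\Phi = C\,g(\lambda_2)$, where $C$ is $\lambda_2$-free with $C<0$ exactly when $K>\beta$, and $g$ is a product of two factors, $p_1/(p_1-1)$ and $1-\mu/(\rho+\lambda_2)$, each positive, strictly increasing (the first because $t\mapsto t/(t-1)$ is decreasing and $p_1=h_1(\lambda_2)$ is strictly decreasing), and tending to $1$. All three claims then fall out at once: $\Phi<0$ gives $\alpha_0\in(0,1)$; $g$ increasing and $C<0$ give $\Phi'<0$ hence $\alpha_0'=\Phi'/(1-\Phi)^2<0$; and $g\to 1$ gives $\alpha_0\to(1-C)^{-1}=\alpha^{\infty}_{0}$ without any separate convergence argument. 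Your decomposition is cleaner and less error-prone, and it makes the mechanism transparent (all $\lambda_2$-dependence is concentrated in a single monotone factor $g$ increasing to $1$, which also immediately yields $\alpha_0(\lambda_2)>\alpha^{\infty}_0$ for all finite $\lambda_2$); what the paper's computation buys in exchange is an explicit formula for $\partial\alpha_0/\partial\lambda_2$, which could be reused for quantitative sensitivity statements but is not needed for the corollary itself.
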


\vspace{10pt}

This also leads to the curious question:  under what conditions does  the abandonment level stay unchanged after the competitor's entry? From the proof of Proposition \ref{Proposition 2} in the Appendix, we   see that $a^{*}=\widetilde{a}^{*}$ if and only if $K\geq \beta$ and $\alpha=\alpha_{0}$. As a concrete example, let $\beta=K$ and $\alpha=1$, then $a^{*}=\widetilde{a}^{*}$. This is intuitive since in this case  the competitor's entry does not affect the firm's profit stream, and thus its abandonment timing.

\section{Optimal Entry Timing Problem}\label{sect-entry}
We now analyze the firm's optimal entry timing problem $\psi(x)$ defined in \eqref{expression_psi}. Suggested by  Figure \ref{fig:sample1}, the firm can choose to cancel the project, enter the market, or wait at anytime during the incubation period.  This leads us to determine the  upper and lower  thresholds,  $e^{*}$ and $c^{*}$, for  entry and cancellation by the firm.

\begin{proposition}
\label{Proposition 3} The firm's  optimal entry timing problem \eqref{expression_psi} is given by
\begin{equation}
\label{solution psi}
\psi(x)=
\begin{cases}
V(x) &\mbox{if $x\ge e^{*}$,}\\
J(x) &\mbox{if $c^{*}< x< e^{*}$,}\\
0 &\mbox{if $0<x\le c^{*}$,}\\
\end{cases}
\end{equation}
where
\begin{align}
\label{solution J}
J(x)&=D_{1}x^{q_{1}}+D_{2}x^{q_{2}}-\frac{a}{\rho+\lambda_{1}-\mu}x-\frac{b}{\rho+\lambda_{1}},
\end{align}
and $q_{i}=h_i(\lambda_1)$ as in \eqref{general}.
The constants $\{e^{*}, c^{*}, D_{1}, D_{2}\}$ are found  from the   system of equations:
\begin{align}
\label{ce1}
D_{1}(c^{*})^{q_{1}}+D_{2}(c^{*})^{q_{2}}-\frac{ac^{*}}{\rho+\lambda_{1}-\mu}-\frac{b}{\rho+\lambda_{1}}&=0, \\
\label{ce2}
D_{1}q_{1}(c^{*})^{q_{1}-1}+D_{2}q_{2}(c^{*})^{q_{2}-1}-\frac{a}{\rho+\lambda_{1}-\mu}&=0, \\
\label{ce3}
D_{1}(e^{*})^{q_{1}}+D_{2}(e^{*})^{q_{2}}-\frac{ae^{*}}{\rho+\lambda_{1}-\mu}-\frac{b}{\rho+\lambda_{1}}&=V(e^{*}), \\
\label{ce4}
D_{1}q_1(e^{*})^{q_{1}-1}+D_{2}q_2(e^{*})^{q_{2}-1}-\frac{a}{\rho+\lambda_{1}-\mu}&=\frac{d}{dx}V(x)|_{x=e^{*}}.
\end{align}
The corresponding optimal entry and cancellation times are, respectively,
\begin{align}
\label{stopping time e}
\tau_{e}^{*}=\inf\{t\geq 0: X_{t}\geq e^{*}\}
\qquad \text{ and } \qquad
\tau_{c}^{*} =\inf\{t\geq 0: X_{t}\leq c^{*}\}.
\end{align}
\end{proposition}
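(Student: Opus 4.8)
The plan is to treat \eqref{expression_psi} as a single optimal stopping problem and solve it by the guess-and-verify (free-boundary) method. Since $V\ge 0$ by Proposition \ref{Proposition 2}, stopping to cancel and collect $0$ can never strictly beat entering; hence, writing $\tau:=\tau_e\wedge\tau_c$, the reward at the end of the incubation phase is effectively $\max(V(X_\tau),0)=V(X_\tau)$, and \eqref{expression_psi} is equivalent to the single-boundary problem
\[
\psi(x)=\sup_{\tau\in\mathcal{T}}\mathbb{E}_x\Big[-\int_0^{\tau}e^{-(\rho+\lambda_1)t}c(X_t)\,dt+e^{-(\rho+\lambda_1)\tau}V(X_\tau)\Big].
\]
The cancellation region is then simply the part of the stopping region lying in $\{V=0\}=(0,\widetilde{a}^{*}]$ (in fact $(0,a^{*}]$). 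Writing $\mathcal{A}=\frac{\sigma^2}{2}x^2\frac{d^2}{dx^2}+\mu x\frac{d}{dx}$ for the generator of $X$, the associated variational inequality is $\min\{(\rho+\lambda_1)\psi-\mathcal{A}\psi+c,\ \psi-V\}=0$ with $\psi\ge 0$, and I would posit the three-region structure suggested by Figure \ref{fig:sample1}: a continuation band $(c^{*},e^{*})$, an entry region $[e^{*},\infty)$ on which $\psi=V$, and a cancellation region $(0,c^{*}]$ on which $\psi=0$, with $0<c^{*}\le a^{*}<e^{*}$.

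\textbf{Construction of the candidate.} On the continuation band, $\psi$ solves $\mathcal{A}\psi-(\rho+\lambda_1)\psi-c(x)=0$. The homogeneous solutions are $x^{q_1},x^{q_2}$, where $q_i=h_i(\lambda_1)$ are the roots of $\frac{\sigma^2}{2}q(q-1)+\mu q-(\rho+\lambda_1)=0$ (one checks $q_1<0<1<q_2$ using $\rho>\mu$), and a linear particular solution is $-\frac{a}{\rho+\lambda_1-\mu}x-\frac{b}{\rho+\lambda_1}$, which is well defined since $\rho+\lambda_1>\mu$; this gives the form \eqref{solution J} for $J$. The four unknowns $\{D_1,D_2,c^{*},e^{*}\}$ are then fixed by $C^1$-pasting: value matching and smooth pasting to the lower obstacle $0$ at $c^{*}$, and to $V$ at $e^{*}$, which are exactly \eqref{ce1}--\eqref{ce4} (note $c^{*}\le a^{*}$ makes $V(c^{*})=V'(c^{*})=0$, consistent with pasting to $0$).

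\textbf{Existence and ordering of the free boundaries.} I would reduce the $4\times4$ system: for each fixed $c^{*}$, \eqref{ce1}--\eqref{ce2} are linear in $(D_1,D_2)$ and uniquely solvable (the Wronskian of $x^{q_1},x^{q_2}$ is nonzero), giving $D_1,D_2$ as explicit functions of $c^{*}$. Substituting into \eqref{ce3}--\eqref{ce4} and eliminating $e^{*}$ leaves a single scalar equation in $c^{*}$, for which I would establish a unique root $c^{*}\in(0,a^{*})$ by the intermediate value theorem together with monotonicity of the residual, examining the limits $c^{*}\downarrow 0$ and $c^{*}\uparrow a^{*}$, and then recover $e^{*}>a^{*}$. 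I expect this to be the main obstacle: one must confirm that the system admits an admissible solution with the correct ordering $0<c^{*}<e^{*}$ (in particular that the continuation band is nonempty and that $e^{*}$ lands in $\{x>a^{*}\}$, where the relevant branch of the piecewise-defined $V$ from Proposition \ref{Proposition 2} applies).

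\textbf{Verification.} Finally I would confirm the candidate is the value function. By construction $\psi\in C^1$ and is piecewise $C^2$. In the cancellation region the generator inequality is immediate: $\mathcal{A}\psi-(\rho+\lambda_1)\psi-c=-c(x)=-(ax+b)<0$. In the entry region, using that $V$ solves $\mathcal{A}V=(\rho+\lambda_2)V-f-\lambda_2\widetilde{V}$ on $\{x>a^{*}\}$, the required inequality $\mathcal{A}\psi-(\rho+\lambda_1)\psi-c\le 0$ reduces to $(\lambda_2-\lambda_1)V-f-\lambda_2\widetilde{V}-c\le 0$ for $x\ge e^{*}$, which I would check directly from the explicit formulas. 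It remains to verify the obstacle inequality $J\ge V$ (equivalently $J\ge\max(V,0)$) throughout $(c^{*},e^{*})$, which follows from the smooth-pasting conditions together with a sign/convexity argument applied to $J-V$. With these in hand, Itô's formula applied to $e^{-(\rho+\lambda_1)t}\psi(X_t)+\int_0^t e^{-(\rho+\lambda_1)s}c(X_s)\,ds$ shows this process is a supermartingale for every admissible $\tau$ and a martingale under the candidate strategy \eqref{stopping time e}; taking expectations over $\tau\wedge n$ and letting $n\to\infty$ (a standard integrability/transversality estimate, using the linear growth of $V$ and $\rho+\lambda_1>\mu$, controls the terminal term) yields $\psi\ge$ payoff for all strategies and equality for $\{\tau_e^{*},\tau_c^{*}\}$, completing the proof.
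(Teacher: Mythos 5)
Your ``Construction of the candidate'' paragraph is, in fact, the paper's entire proof: the paper likewise posits the three-region structure with thresholds $c^*<e^*$, solves $\mu x\psi'+\tfrac{\sigma^2x^2}{2}\psi''-(\rho+\lambda_1)\psi-c(x)=0$ on the band with general solution $D_1x^{q_1}+D_2x^{q_2}+c_0(x)$, $c_0(x)=-\tfrac{a}{\rho+\lambda_1-\mu}x-\tfrac{b}{\rho+\lambda_1}$, imposes value matching and smooth pasting at both boundaries to obtain \eqref{ce1}--\eqref{ce4}, and stops there. What you do differently: (i) you first collapse the pair $(\tau_e,\tau_c)$ into a single stopping time using $V\ge 0$, so that the terminal reward is $\max(V,0)=V$ and \eqref{expression_psi} becomes a standard single-obstacle problem --- a clean reduction the paper never makes (it simply declares the entry/cancellation/waiting regions), and which correctly explains why \eqref{ce1}--\eqref{ce2} are ``pasting to $V$'' in disguise, provided $c^*\le a^*$; (ii) you add an existence/ordering discussion for the free boundaries and an It\^o/supermartingale verification step, neither of which appears in the paper at all (its own Remark concedes that \eqref{ce1}--\eqref{ce4} admit no explicit solution and are solved numerically). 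Your route buys an actual verification theorem, whereas the paper only characterizes a candidate; the cost is that the steps you honestly flag as sketches --- monotonicity of the residual yielding a unique $c^*\in(0,a^*)$ with $e^*>a^*$, the obstacle inequality $J\ge V$ on the band, and the sign check $(\lambda_2-\lambda_1)V-f-\lambda_2\widetilde{V}-c\le 0$ on $[e^*,\infty)$ --- are precisely the hard analytic content and would have to be carried out in full for your argument to close. Since the paper supplies none of these either, your proposal coincides with the published proof where they overlap and is strictly more ambitious (but correspondingly incomplete) where they do not.
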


\vspace{10pt}
\begin{remark}
To our best knowledge,  the above system of  nonlinear equations do not admit  explicit  solutions. Even in the simpler perpetual American installment call/put option problem studied in \cite{Ciurlia2009,Kimura2009}, similar systems of equations arise and no explicit  solutions are given. Nevertheless, these equations can be solved efficiently  by standard root-finding methods, such as  the Newton-Raphson method, available in many   computational softwares.  \end{remark}

\vspace{10pt}

\begin{figure}[htbp]
\centering \quad
\subfigure[{Case (I): $\widetilde{a}^{*}\leq a^{*}$.}]{\label{fig:1} \includegraphics[width=3in,trim=1cm 0.1cm 0.2cm 0.2cm, clip=true]{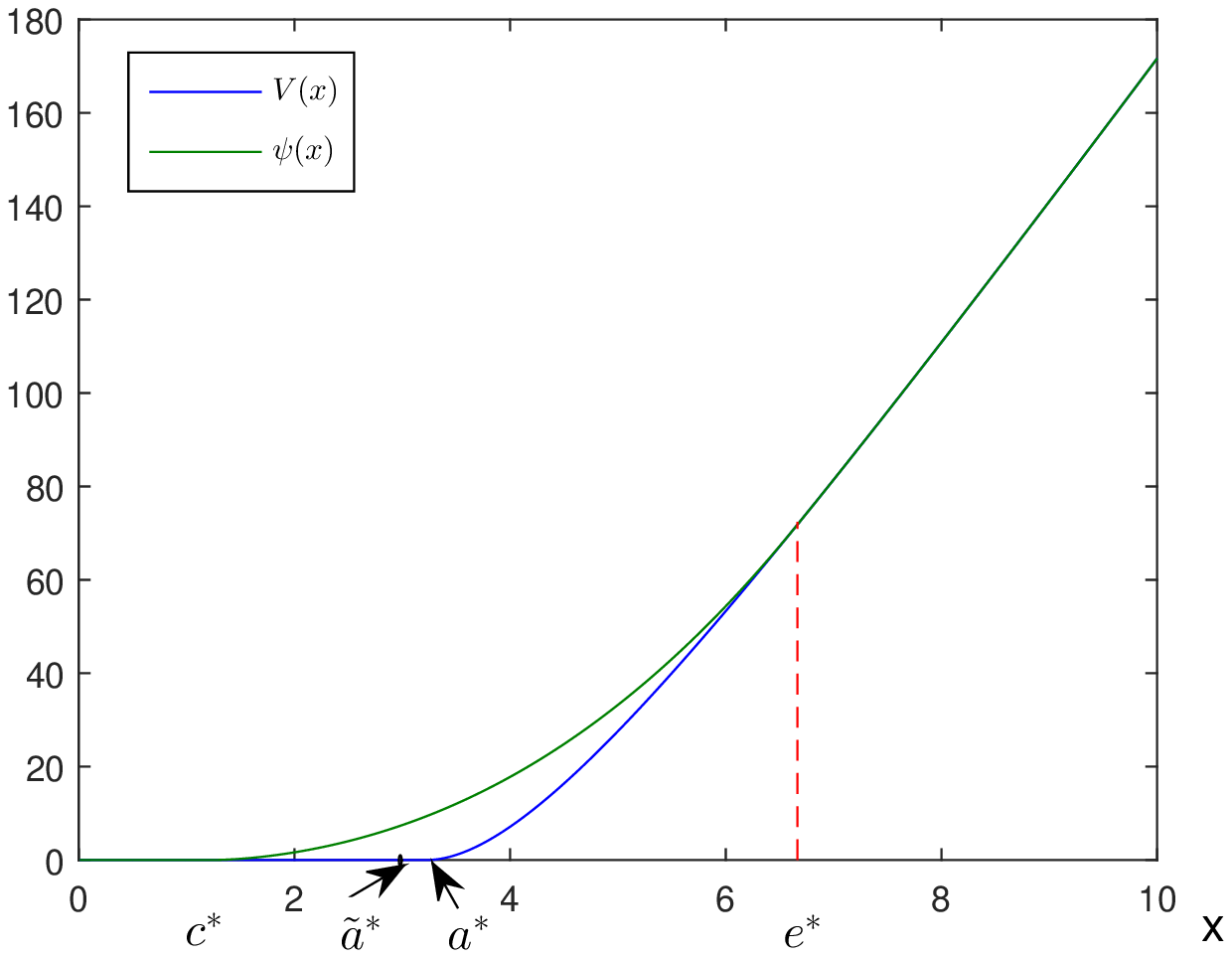} }
\subfigure[Case (II): $\widetilde{a}^{*}>a^{*}$.]{\label{fig:2}
\includegraphics[width=3in,trim=1cm 0.1cm 0.2cm 0.2cm, clip=true]{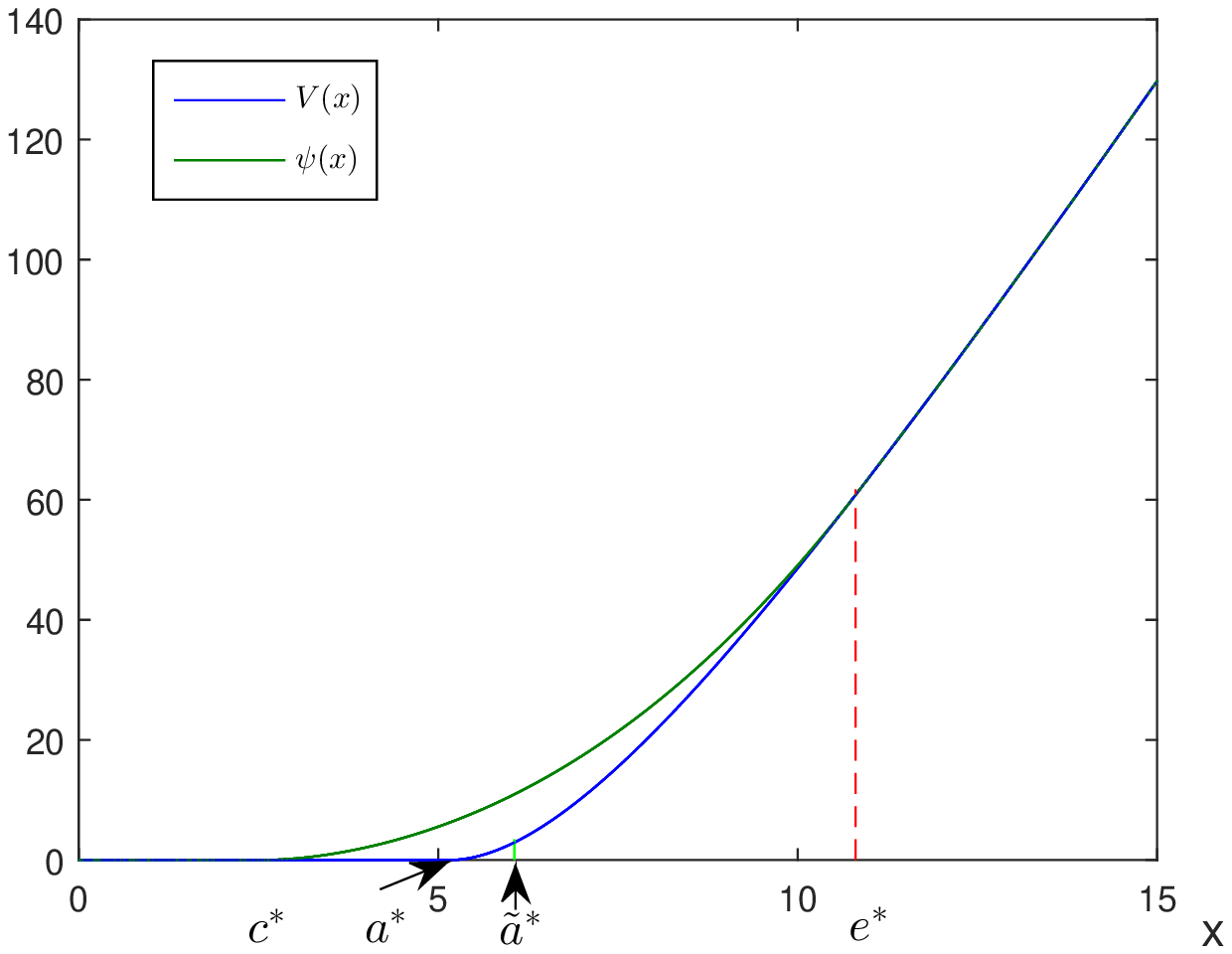} }
\caption{\small{The value functions $\psi(x)$ and $V(x)$ in cases (I) and (II). Left:  $c^{*}=1.21$, $\widetilde{a}^{*}=3.03$, $a^{*}=3.22$ and $e^{*}=6.66$. Right:   $c^{*}=2.55$, $\widetilde{a}^{*}=6.06$, $a^{*}=5.16$ and $e^{*}=10.81$. Common parameters: $\mu=0.03$, $\sigma=0.2$, $\rho=0.05$, $a=0.1$, $b=0.1$, $K=10$, $\lambda_{1}=0.1$, $\lambda_{2}=0.2$,  $\beta=7$, and  $\alpha=0.6$ (left), $0.3$ (right).}}
\label{fig:Plot of Examples}
\end{figure}

The value function $\psi(x)$ depends on the early termination risk parameter $\lambda_1$ as we can see from $J(x)$ in \eqref{solution J} and the coefficients  in \eqref{ce1}-\eqref{ce4}.  We observe from
 \eqref{solution psi} that $\psi(x)$ equals to $V(x)$  for $x \ge e^*$.  Therefore, the competition risk, which exists after the firm's entry,  has an indirect effect on the firm's decisions prior to entering the market since $V(x)$, and thus $e^*$ and $c^*$, depend on $\lambda_2$. Moreover, noticing that $V(x)$ appears in   \eqref{ce1}-\eqref{ce4}, $a^{*}$ enters the system of equations for $e^{*}$ and $c^{*}$ indirectly through the value function $V(x)$ that is tied to $a^{*}$. As a result, both $e^{*}$ and $c^{*}$ depend on $a^{*}$.

Figure \ref{fig:Plot of Examples} shows the value functions  $\psi(x)$ and $V(x)$, along  with four thresholds $c^{*}$, $\widetilde{a}^{*}$, $a^{*}$, and $e^{*}$.   From both panels, we see that $\psi(x)$ and $V(x)$ are both increasing in $x$ as  a higher expected net present value is attained at a higher price level. The value function $\psi(x)$ dominates $V(x)$ due to the timing option (to enter the market) embedded in $\psi(x)$. If the firm chooses to enter immediately at some $x$, then  we have $\psi(x) = V(x)$. This occurs for $x\ge e^{*}$.  By checking the condition in Proposition \ref{Proposition 2}, we have $K=10>\beta=7>0$ and $\alpha_0=0.47$, and in turn,  $\alpha=0.6>0.47=\alpha_0$ in Figure \ref{fig:1} while $\alpha=0.3<0.47=\alpha_0$ in Figure \ref{fig:2}.  In other words,  the left and right panels represent case (I) with $a^{*}\ge \widetilde{a}^{*}$    and case (II)  with $\widetilde{a}^{*}>a^*$  in Proposition \ref{Proposition 2} respectively.

Also, we observe that the cancellation threshold   $c^{*}$ is smaller than both post-entry abandonment thresholds $a^{*}$ and $\widetilde{a}^{*}$. During the incubation period, the option to enter induces the  firm to be willing to incur a loss in order  to wait for the opportunity to enter the market.  However, once the firm has entered the market, the entry option vanishes, and the firm demands more profit to stay in the market, and will exit at a level higher than $c^{*}$.

\vspace{10pt}
\section{Sensitivity Analysis}\label{sect-sen}

In this section, we examine the effects of the early termination risk and the competitor's arrival on the firm's optimal strategies. Common parameters are of the same values as in Figure \ref{fig:Plot of Examples}.  
\subsection{Early Termination}
First,  we conduct a sensitivity analysis of the thresholds with respect to the early termination rate $\lambda_{1}$. As is seen in Figure \ref{fig:sensitivity with respect to lambda1}, the pre/post-competition  abandonment levels do not change with  $\lambda_{1}$ since the firm has already entered the market.   For the entry and cancellation thresholds,   we see that  $e^{*}$  decreases but  $c^{*}$   increases as $\lambda_1$ increases. In other words,  a high early termination risk induces the firm to enter the market early, even if that means capturing a lower profit stream upon entry. Moreover, the firm may also  cancel the project earlier because the expected profitability is reduced by the higher termination risk.  In Figure \ref{fig:sensitivity with respect to lambda1},  we also observe that the threshold $e^{*}$ is higher than  $a^{*}$ while  $c^{*}$ is lower than  $a^{*}$.

Figures \ref{fig:3} and \ref{fig:4} depict the two cases   $\widetilde{a}^{*}\leq a^{*}$ and $\widetilde{a}^{*}> a^{*}$ respectively.  The impact of early termination risk is different  in these two scenarios.  If the competitor's entry causes a significant  shrinkage in the firm's profit  as in case (II), then the post-competition abandonment level  $\widetilde{a}^{*}$ will be higher than  the pre-competition one $a^{*}$. Also, the firm's entry level  $e^{*}$ is decreasing in   $\lambda_{1}$. In Figure \ref{fig:4}, we see that  $e^{*}$ will eventually fall below $\widetilde{a}^{*}$ but  stay above     $a^*$.  In this case, whenever the competitor  enters,   the firm switches its  abandonment level  to the higher level $\widetilde{a}^*$. This forces  the firm to abandon immediately involuntarily if the current value of $X$ is below $\widetilde{a}^*$.

\begin{figure}[h]
	\centering
	\subfigure[Case (I): $\widetilde{a}^{*}\leq a^{*}$. $\alpha=0.6$, $\beta=7$.]{\label{fig:3} \includegraphics[width=3.1in,trim=0.8cm 0.1cm 0.2cm 0.2cm, clip=true]{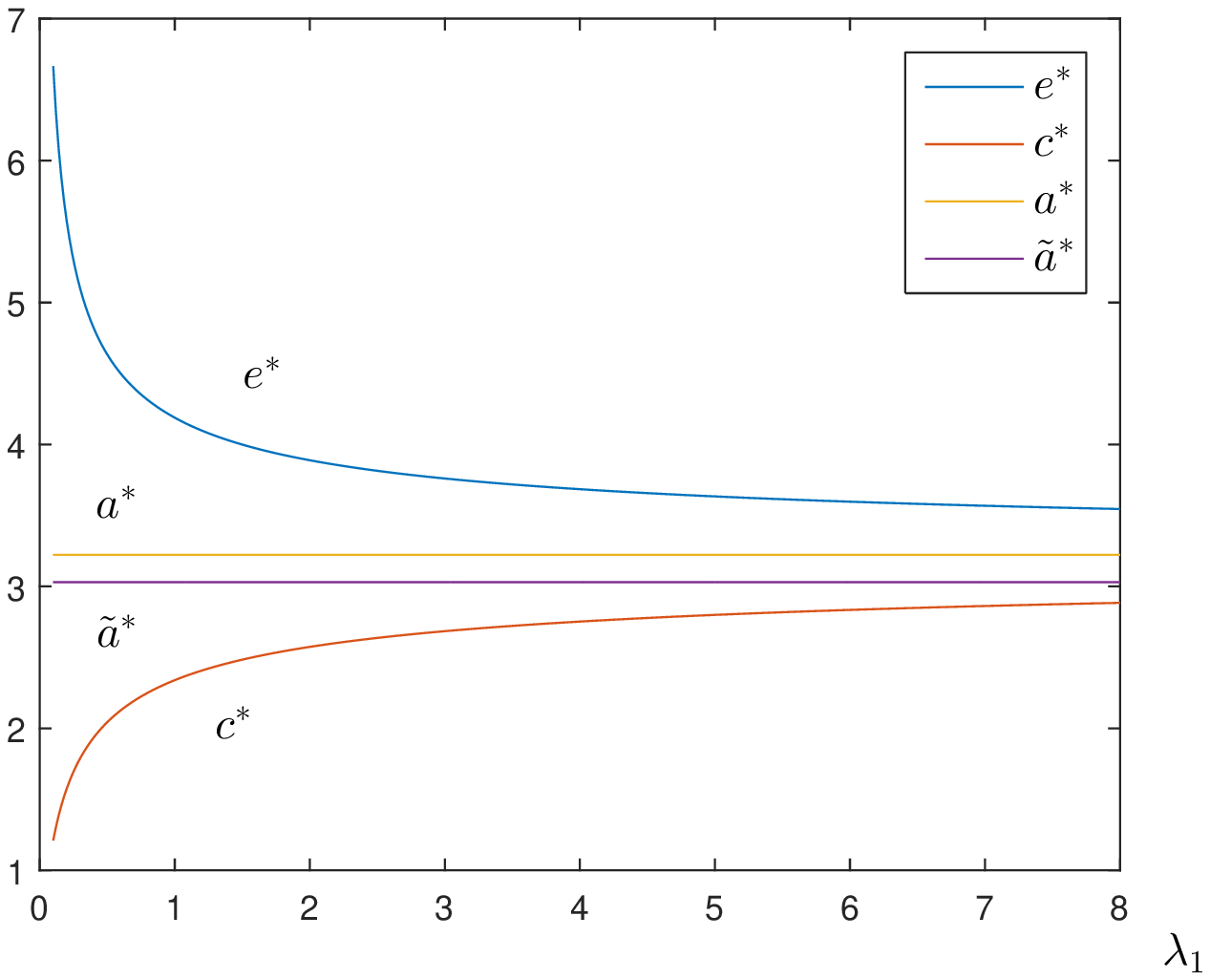} }
	\subfigure[Case (II): $\widetilde{a}^{*}>a^{*}$. $\alpha=0.3$, $\beta=7$.]{\label{fig:4}
		\includegraphics[width=3.1in,trim=0.8cm 0.1cm 0.2cm 0.2cm, clip=true]{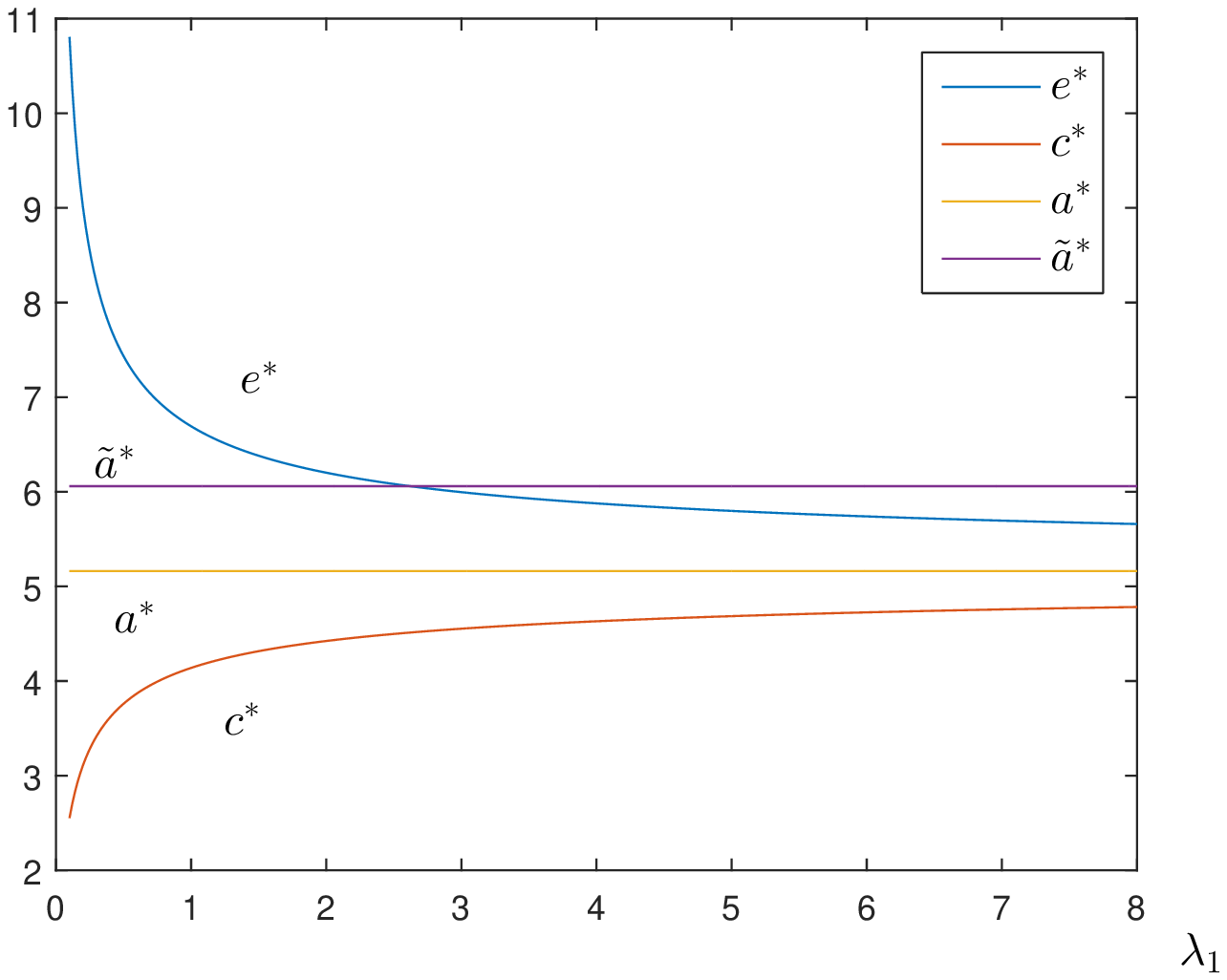} }
	\caption{\small{Sensitivity of the firm's strategies with respect to early termination rate $\lambda_{1}$.}}
	\label{fig:sensitivity with respect to lambda1}
\end{figure}

\subsection{Competition Risk}

The risk of competition  will influence the firm's pre-competition  exit strategy, described by $a^*$, in a non-trivial way. We discuss the four scenarios of different  sensitivities in Figure \ref{fig:sensitivity with respect to lambda2}. In the following discussion, we keep in mind that $\widetilde{a}^*$ doesn't depend on $\lambda_2$ and   $\alpha_{0}$ is decreasing  in $\lambda_{2}$ by  Corollary \ref{corollary 3}.

Figure \ref{fig:6} illustrates  the first scenario with  $K<\beta$. This corresponds to  case (II) in  Proposition \ref{Proposition 2}, where  we have shown that $a^{*}<\widetilde{a}^{*}$ for all $\lambda_{2}>0$.  In addition, $a^{*}$ is increasing  in $\lambda_2$, which means that the firm will choose to  exit early under the threat of  competitor's entry.

For  the remaining  three scenarios in Figures \ref{fig:7}-\ref{fig:9}, we have  $K\ge \beta$. By  Corollary \ref{corollary 3},  we have  $1>\alpha_{0}(0)>\alpha_{0}(\lambda_{2})>\alpha^{\infty}_{0}>0$ for all $\lambda_2>0$. In Figure \ref{fig:7}, $a^*$ is again increasing in $\lambda_2$. In this figure, we have $\alpha\leq\alpha^{\infty}_{0}$, which belongs to case (II) in Proposition \ref{Proposition 2}, and  thus $a^{*}<\widetilde{a}^{*}$.

Figure \ref{fig:8} shows that $a^*$ is first increasing then decreasing in $\lambda_2$. This is because   the constant $\alpha_{0}$ is first larger than $\alpha$ but then smaller than $\alpha$ as $\lambda_{2}$ increases. Therefore, as $\lambda_2$ increases, it changes from case (II) to  case (I)  in  Proposition \ref{Proposition 2}. That explains why $a^{*}$ is smaller than $\widetilde{a}^{*}$ at first, and the relation reverses for large $\lambda_2$.

  Lastly, in Figure \ref{fig:9} we have  $\alpha\geq\alpha_{0}(0)$, which corresponds to  case (I) in Proposition \ref{Proposition 2}.  We have shown that  $a^{*}\geq\widetilde{a}^{*}$ for all $\lambda_{2}$.  In contrast to the other scenarios,  the pre-competition threshold $a^{*}$ is decreasing and  approaches  $\widetilde{a}^{*}$ as $\lambda_2$ increases. In this scenario, the impact of the competitor's entry on the firm's profit is much less than in other scenarios, as indicated by the higher  value of $\alpha$.

In summary, scenarios (a), (b), and the first part of scenario (c) belong to  case (II), in which the  competitor's potential entry will lead to a significant profit reduction. Therefore,  the firm exits early by selecting a higher pre-competition abandonment threshold $a^{*}$ as the competitor's arrival rate increases. The opposite happens to scenario (d) and the second part (large $\lambda_2$) of scenario (c).  As $\lambda_2$ increases,  which means high risk of facing  competition in the future, the firm tries to  stay longer before the competitor's entry to make more profits in case of the potential significant loss afterwards. This results in a decreasing  pre-competition abandonment threshold  $a^{*}$.

Despite different behaviors of $a^{*}$, there is a common phenomenon that $a^{*}$  approaches  $\widetilde{a}^{*}$ as $\lambda_2$ increases. The intuitive explanation is as follows. When the competition risk is very high, the firm  will adjust its abandonment level  $a^{*}$ in order not to be affected too much when the competitor  eventually enters. That means that the firm's  abandonment level $a^{*}$ before the competitor's entry will be  close to the post-competition  abandonment level $\widetilde{a}^{*}$.

\begin{figure}[h]
\centering
\subfigure[$\alpha=0.20$, $\beta=14$.]{\label{fig:6} \includegraphics[width=3.1in,trim=0.6cm 0.1cm 0.2cm 0.2cm, clip=true]{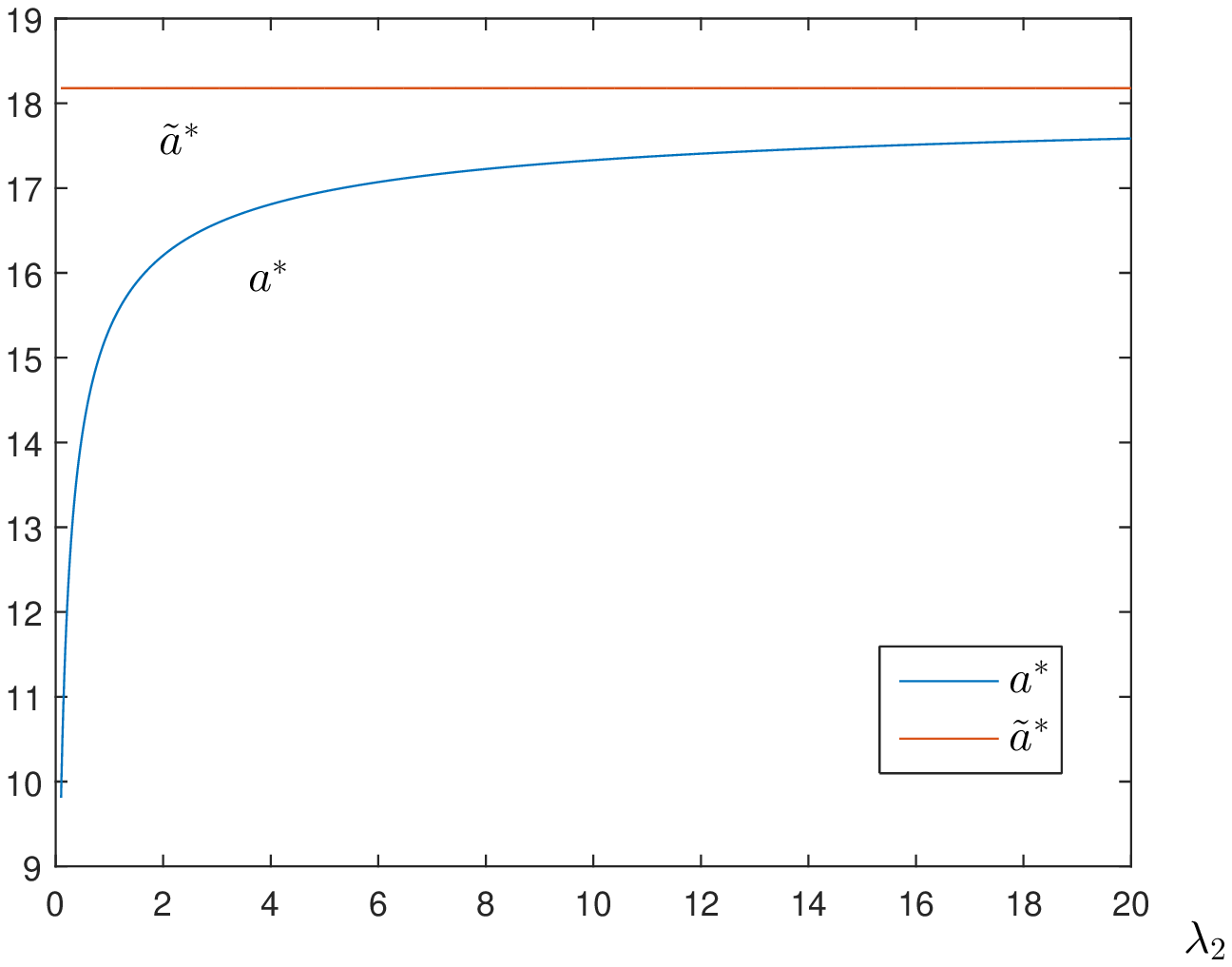} }
\subfigure[$\alpha=0.20$, $\beta=7$.]{\label{fig:7} \includegraphics[width=3.1in,trim=0.6cm 0.1cm 0.2cm 0.2cm, clip=true]{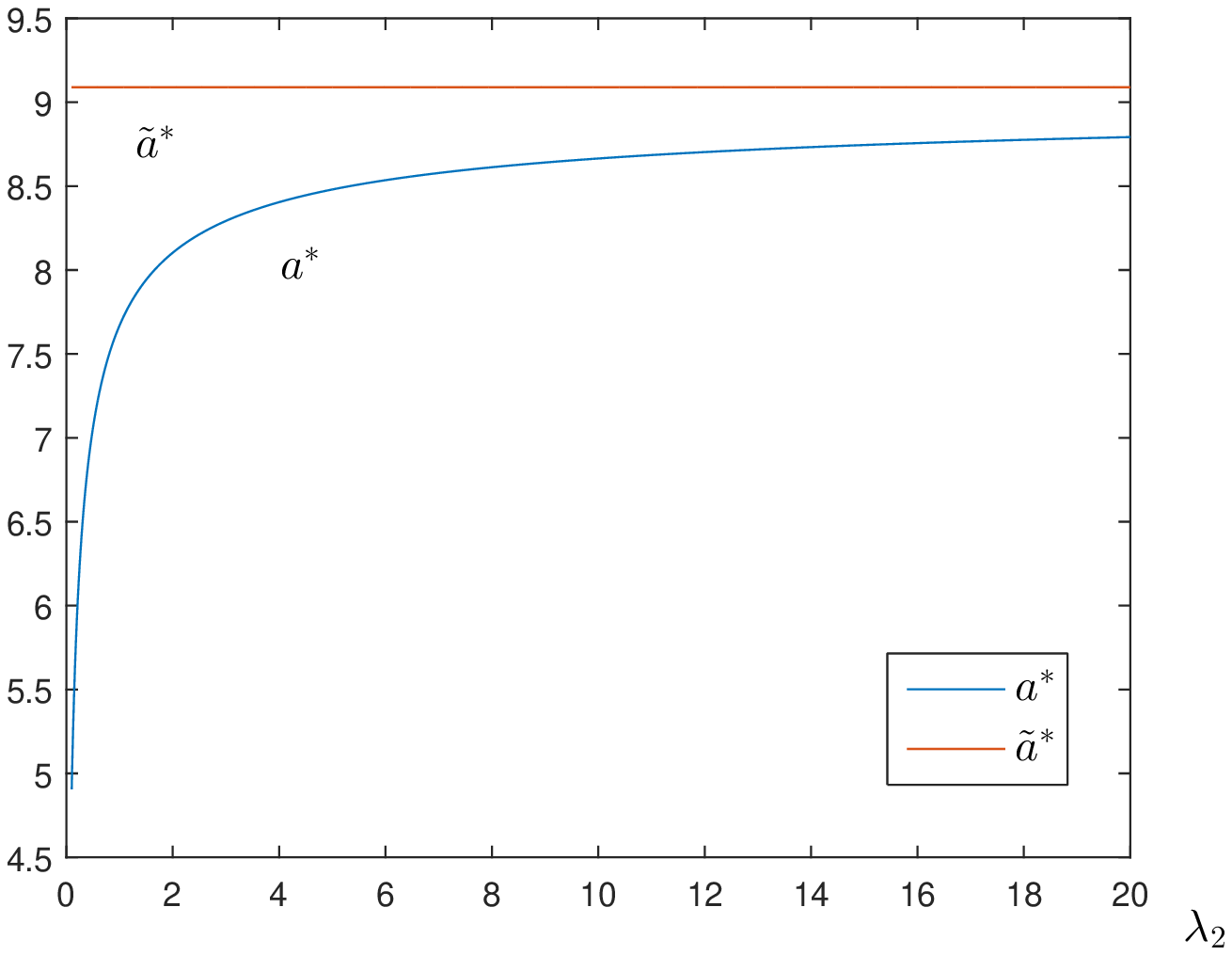} }
\subfigure[$\alpha=0.45$, $\beta=7$.]{\label{fig:8} \includegraphics[width=3.1in,trim=0.6cm 0.1cm 0.2cm 0.2cm, clip=true]{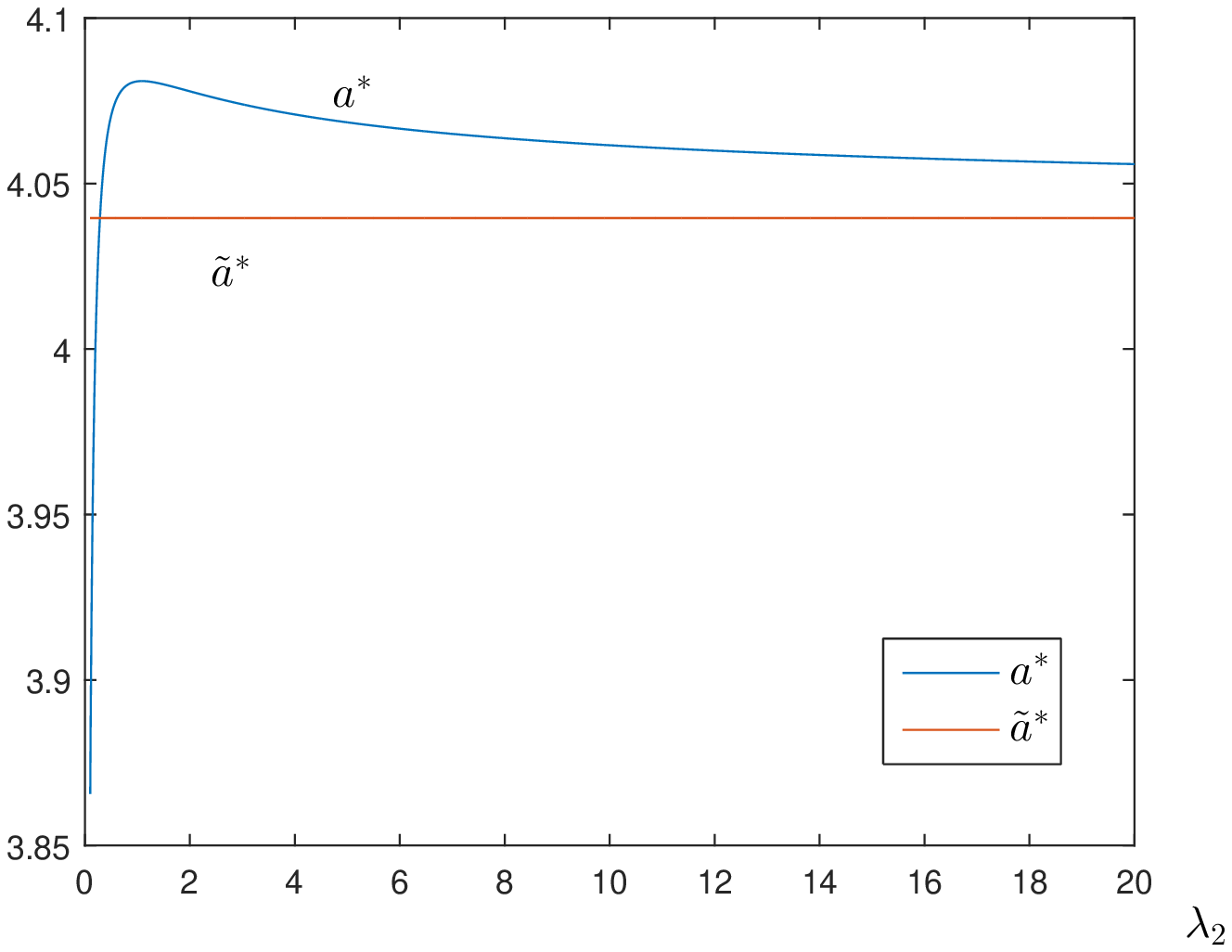} }
\subfigure[$\alpha=0.80$, $\beta=7$.]{\label{fig:9} \includegraphics[width=3.1in,trim=0.6cm 0.1cm 0.2cm 0.2cm, clip=true]{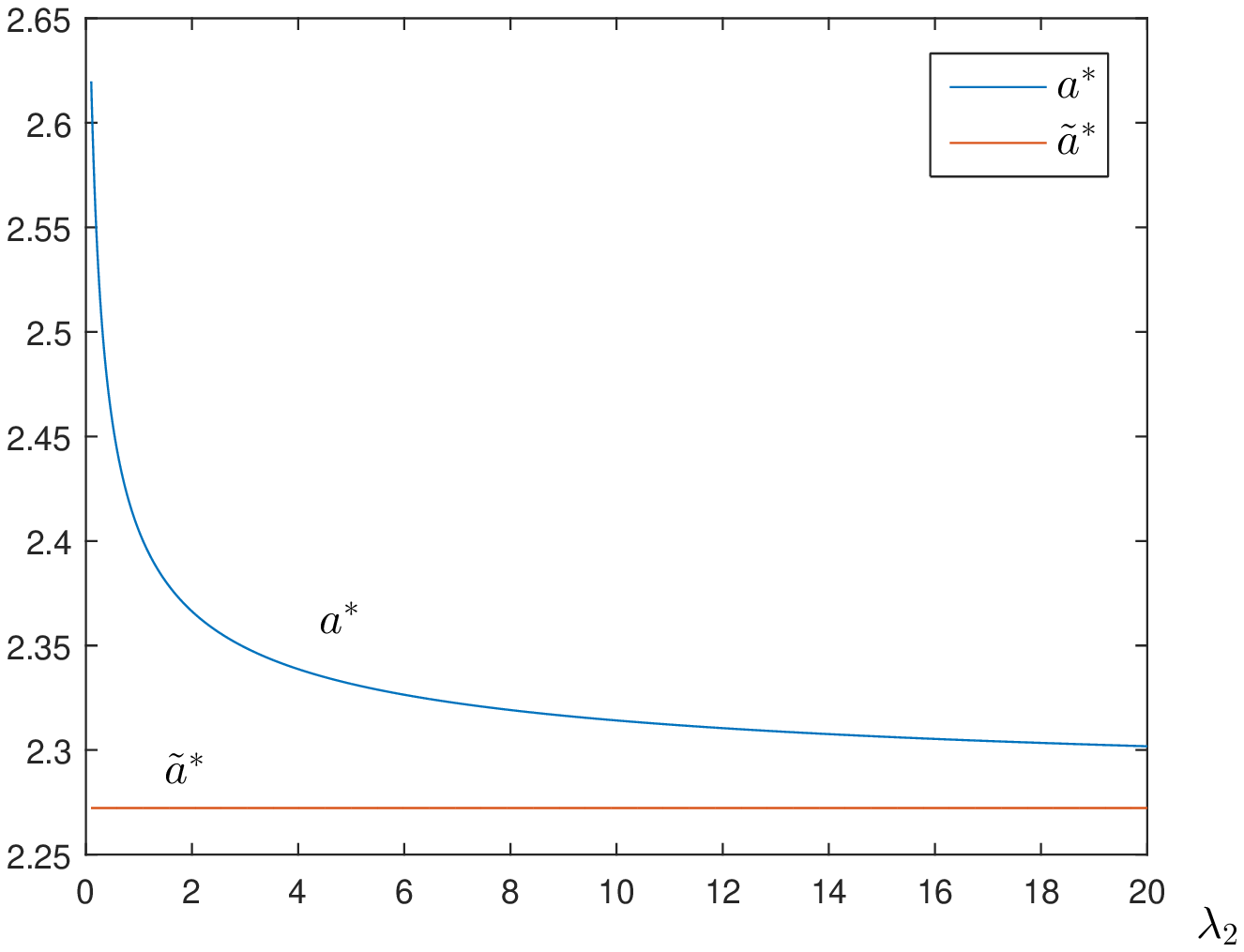} }
\caption{\small{Sensitivity of the  abandonment thresholds, $a^*$ (pre-competition) and $\widetilde{a}^*$ (post-competition), with respect to the competitor's arrival rate $\lambda_{2}$.}}
\label{fig:sensitivity with respect to lambda2}
\end{figure}

In addition to $a^{*}$, the entry and cancellation thresholds $e^{*}$ and $c^{*}$ are also affected by the  competitor's arrival rate  $\lambda_{2}$ after the firm's market entry. In  two scenarios, Figure \ref{fig:10} and Figure \ref{fig:11} respectively take the same parameters as Figure \ref{fig:7} and Figure \ref{fig:9}, so   they represent respectively case (II) and (I) in Proposition \ref{Proposition 2}.  Figure \ref{fig:10} shows that both $e^{*}$ and $c^{*}$  are increasing in $\lambda_{2}$ under case (II). In particular, the entry threshold $e^{*}$ is seen to be lower  than the abandonment threshold $\widetilde{a}^{*}$ when $\lambda_2$ is small, but then surpasses  it  for large  $\lambda_{2}$. In this case, the competitor's arrival will lead to a significant post-entry profit reduction. Therefore, as is intuitive,   the  firm requires  a higher  entry level to ensure a higher  post-entry  profit so as to make the entry worthwhile. Nevertheless, under case (I), Figure \ref{fig:11} shows that both $e^{*}$ and $c^{*}$ are  decreasing in $\lambda_{2}$. In other words, when the post-entry competition arrival is not expected to have a significant impact on profit, the firm will opt to enter the market earlier at a lower entry threshold.

\begin{figure}[h]
	\centering
	\subfigure[$\alpha=0.20$, $\beta=7$.]{\label{fig:10} \includegraphics[width=3.1in,trim=0.6cm 0.1cm 0.2cm 0.2cm, clip=true]{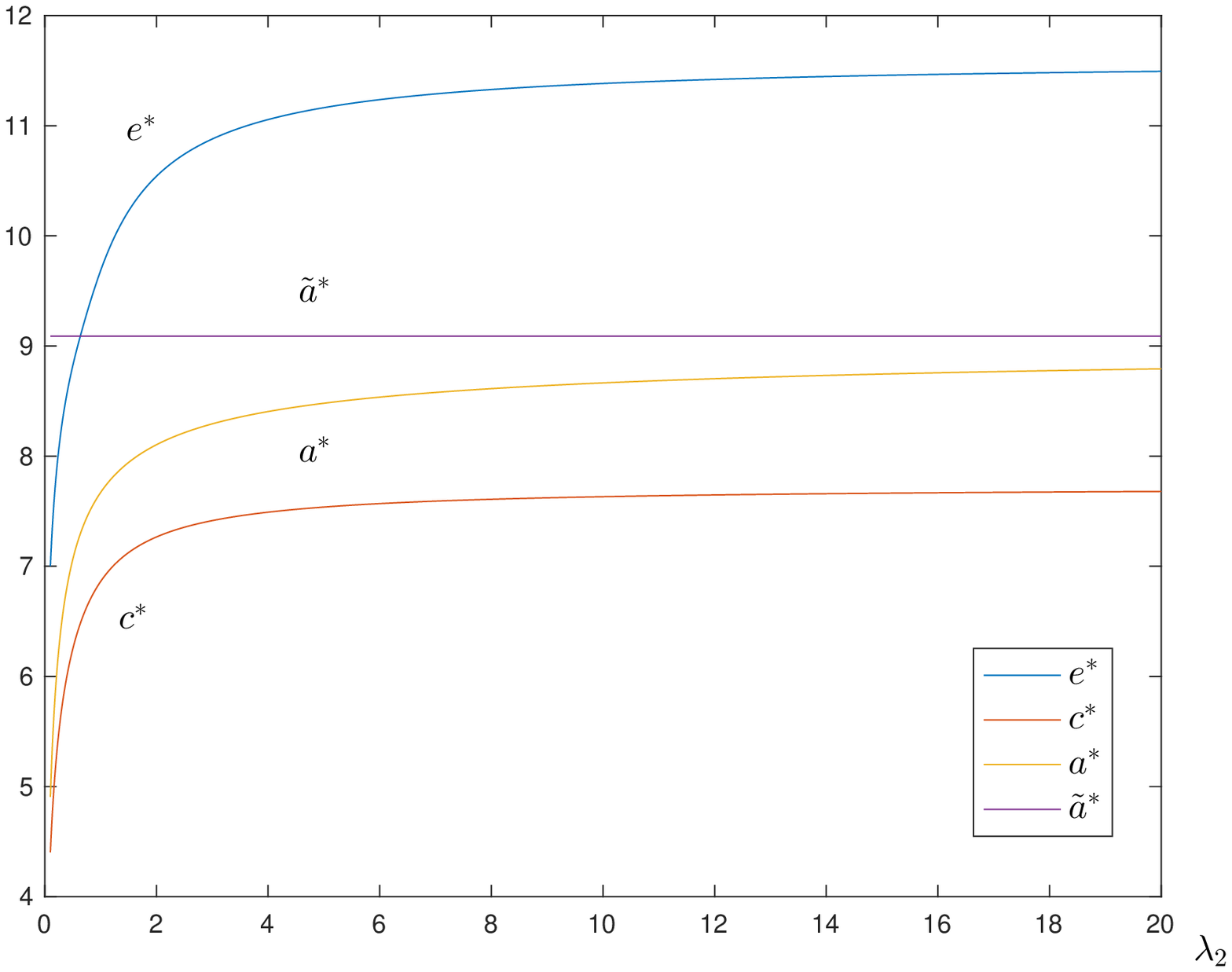} }
	\subfigure[$\alpha=0.80$, $\beta=7$.]{\label{fig:11} \includegraphics[width=3.1in,trim=0.6cm 0.1cm 0.2cm 0.2cm, clip=true]{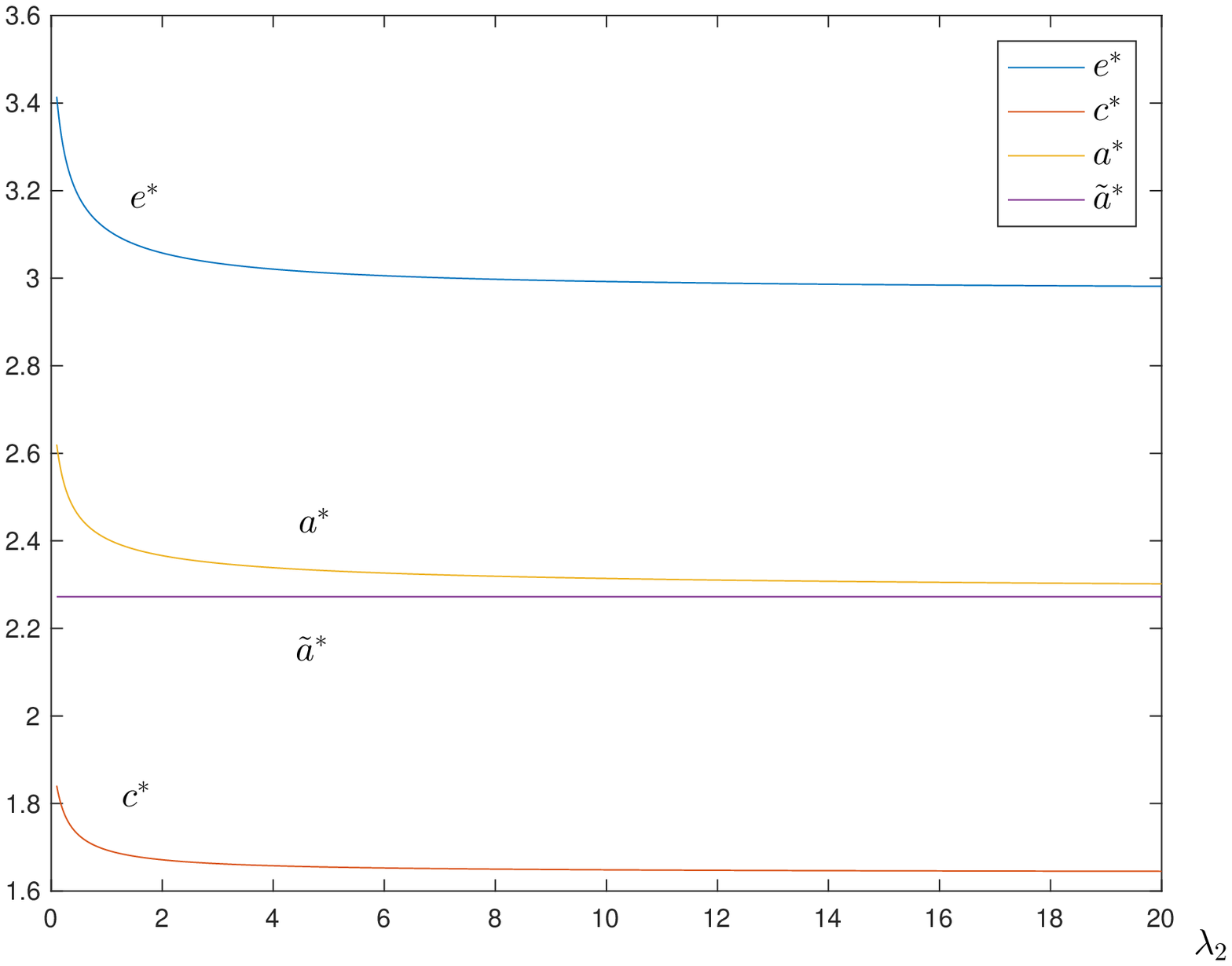} }
	\caption{\small{Sensitivity of the entry and cancellation thresholds, $e^*$ and ${c}^*$, with respect to the competitor's arrival rate $\lambda_{2}$.}}
	\label{fig:e c sensitivity with respect to lambda2}
\end{figure}

\section{Concluding Remarks}\label{sect-conclude}
We have developed a  model to evaluate the timing options  to enter  or  exit a given market for a startup firm. Our model accounts for the risks of early termination prior to market entry, as well as the risk of new competition that may reduce the firm's future profit stream. Our analytic solutions  allow for instant computation of the firm's optimal expected NPV and the associated optimal timing strategies. We have provided explanation on the non-trivial combined effect of early termination and competition risks on the firm's entry and pre-competition abandonment decisions.

 There are a number of directions for future research.  For tractability, we have chosen to work with a lognormal model. Alternatively, we will consider in future work   other dynamics for the underlying price process, such as jump diffusions or mean-reverting processes, such as the Cox-Ingersoll-Ross, and exponential Ornstein-Uhlenbeck processes (\cite{LeungLiWang2014CIR}, \cite{LeungLiWang2015XOU}). For a startup, it is also useful to choose between equity and debt financing and examine the implications to the firm's investment and bankruptcy decisions (see, e.g. \cite{TianYuan}).   Another natural extension of our model is to incorporate  sequential random arrivals and departures of competitors.

\newpage
\appendix
\section{Appendix}\label{proof}
In this appendix, we provide the proofs of all the propositions and corollaries presented above.

\subsection{Proof of Equations (\ref{expression_V}) and (\ref{expression_psi})}\label{app1}
Recall  that  $({\cal{F}}_{t})_{t\geq 0}$ is the filtration  generated by $X$. We re-state equation (\ref{expression_V}), and apply the tower property to get\begin{align}
V(x) &= \sup_{\substack{\tau_{a}\in\cal{T}}}\mathbb{E}_{x}\left[\int_{0}^{\tau_{a}\wedge\zeta_{2}}\ee^{-\rho t}f\left(X_{t}\right)\,dt + \ind_{\{\tau_{a}>\zeta_{2}\}}\ee^{-\rho\zeta_{2}}\widetilde{V}(X_{\zeta_{2}}) \,\right] \nonumber  \\
&= \sup_{\substack{\tau_{a}\in\cal{T}}}\mathbb{E}_{x}\left[\mathbb{E}\left[\int_{0}^{\tau_{a}\wedge\zeta_{2}}\ee^{-\rho t}f\left(X_{t}\right)\,dt + \ind_{\{\tau_{a}>\zeta_{2}\}}\ee^{-\rho\zeta_{2}}\widetilde{V}(X_{\zeta_{2}})~ \Big | {\cal{F}}_{\tau_{a}}\right]\right].\label{Vinapp}
\end{align}
 
The arrival time $\zeta_{2}$ is exponentially distributed and   independent of $X$, so the first term inside the expectation \eqref{Vinapp}  can be written as
\begin{align}
\mathbb{E}\left[\int_{0}^{\tau_{a}\wedge\zeta_{2}}\ee^{-\rho t}f\left(X_{t}\right)\,dt~ \Big | {\cal{F}}_{\tau_{a}}\right] 
& = \mathbb{E}\left[\int_{0}^{\tau_{a}}\ind_{\{\zeta_{2}\geq t\}}\ee^{-\rho t}f\left(X_{t}\right)\,dt~ \Big | {\cal{F}}_{\tau_{a}}\right]\nonumber \\
& = \mathbb{E}\left[\int_{0}^{\tau_{a}}\left(\int_{t}^{+\infty}\lambda_{2}\ee^{-\lambda_{2}y}\,dy\right)\ee^{-\rho t}f\left(X_{t}\right)\,dt~ \Big | {\cal{F}}_{\tau_{a}}\right]  \nonumber \\
& =\mathbb{E}\left[ \int_{0}^{\tau_{a}}\ee^{-(\lambda_{2}+\rho )t}f\left(X_{t}\right)\,dt~ \Big | {\cal{F}}_{\tau_{a}}\right].  \label{first item in V}
\end{align}

By the same procedure, we express  the second term in \eqref{Vinapp} as
\begin{equation}
\mathbb{E}\left[ \ind_{\{\tau_{a}>\zeta_{2}\}}\ee^{-\rho\zeta_{2}}\widetilde{V}(X_{\zeta_{2}}) ~ \Big | {\cal{F}}_{\tau_{a}}\right]  =\mathbb{E}\left[  \int_{0}^{\tau_{a}}\ee^{-(\lambda_{2}+\rho) t}\lambda_{2}\widetilde{V}(X_{t}) \, dt~ \Big | {\cal{F}}_{\tau_{a}}\right] . \label{second item in V}
\end{equation}
Combining (\ref{first item in V}) and (\ref{second item in V}),   we obtain equation (\ref{expression_V}). Equation (\ref{expression_psi}) is derived in the same fashion.

\subsection{Proof of Proposition \ref{Proposition 1}}
\begin{proof}
 We consider a candidate stopping time $\widetilde{\tau}_{a}=\inf\{t\geq 0: X_{t}\leq \widetilde{a}^{*}\}$, where $\widetilde{a}^{*}>0$. In the stopping region $\{x<\widetilde{a}^{*}\}$, $\widetilde{V}(x) = 0$ from the definition of $\widetilde{\tau}_{a}$ and \eqref{expression_tildeV}. In the continuation region $\{x\geq \widetilde{a}^{*}\}$, $\widetilde{V}(x)$ satisfies the following ODE:
\begin{equation}
\label{ODE tilde V}
-\rho\widetilde{V}(x)+\mu x\widetilde{V}'(x)+\frac{\sigma^{2}x^{2}}{2}\widetilde{V}''(x)+g(x)=0,
\end{equation}
with boundary conditions:
\begin{align}
\label{boundary condition tilde V}
\widetilde{V}(\widetilde{a}^{*}) = 0, \,\,\,\,\,\,
\widetilde{V}'(\widetilde{a}^{*}) = 0, \,\,\,\,\,\,
\lim_{x\to\infty}\widetilde{V}'(x) < \infty.
\end{align}

The ODE and the last condition in \eqref{boundary condition tilde V} indicate that the   solution is given by $B_{1}x^{k_{1}}+g_{0}(x)$, where with constants $B_{1}$ and $k_1$, such that     $k_{1}$ is the negative  root of the  associated  quadratic equation, and $g_{0}(x)$ is a   particular solution of \eqref{ODE tilde V}:
\begin{equation}
g_{0}(x)=\frac{\alpha x}{\rho-\mu}-\frac{\beta}{\rho}.
\end{equation}

To determine $\widetilde{a}^{*}$ and $B_{1}$, we utilize boundary conditions \eqref{boundary condition tilde V} to get the   system of equations:
\begin{align}
B_{1}(\widetilde{a}^{*})^{k_{1}}+\frac{\alpha \widetilde{a}^{*}}{\rho-\mu}+\frac{\beta}{\rho}&=0, \\
B_{1}k_{1}(\widetilde{a}^{*})^{k_{1}-1}+\frac{\alpha}{\rho-\mu}&=0.
\end{align}
The above equations   yield \eqref{solution tilde V} and \eqref{astar} in Proposition \ref{Proposition 1}.\end{proof}

\subsection{Proof of Corollary \ref{corollary 1}}

\begin{proof}
First, we define   the ratio:
\begin{align}
\label{second_k1}
\frac{k_1}{k_1-1}&=\left(1+\sigma^2\left(\sqrt[]{(\frac{\sigma^{2}}{2}-\mu)^{2}+2\sigma^{2}\rho}-\frac{\sigma^2}{2}+\mu\right)^{-1}\right)^{-1}.
\end{align}
From \eqref{second_k1} and the expression of $\widetilde{a}^{*}$ in  \eqref{astar}, we know that $\frac{k_1}{k_1-1}$ increases in $\rho$ and thus $\widetilde{a}^{*}$ increases in $\rho$.

Next, we differentiate $\widetilde{a}^{*}$ with respect to $\mu$. Let $\delta:=\sqrt[]{(\frac{\sigma^{2}}{2}-\mu)^{2}+2\sigma^{2}\rho}>|\frac{\sigma^{2}}{2}-\mu|\geq 0$, then
\begin{equation}
\frac{\partial{\widetilde{a}^{*}}}{\partial{\mu}}=\frac{-2\beta}{\alpha(2\rho+\frac{\sigma^{2}}{2}-\mu+\delta)^2}\left(\frac{\sigma^2}{2}+\delta+(\rho\delta+(\rho-\mu)(\mu-\frac{\sigma^2}{2}))
\delta^{-1}\right).
\end{equation}
Since $\rho\delta>|(\rho-\mu)(\mu-\frac{\sigma^2}{2})|$, we have $\frac{\partial{\widetilde{a}^{*}}}{\partial{\mu}}<0$ and thus $\widetilde{a}^{*}$ decreases with respect to $\mu$.

In addition,
\begin{equation}
\frac{\partial k_1}{\partial\sigma}=\frac{\partial k_1}{\partial{(\sigma^2)}}\frac{\partial{(\sigma^2)}}{\partial\sigma}=\frac{\frac{\sigma^2}{2}-\mu+\delta+2\rho}{2\rho}k_1^2\sigma>0.
\end{equation}
Therefore $k_1<0$ and $\frac{\partial k_1}{\partial\sigma}>0$ indicate that $\frac{k_1}{k_1-1}$ decreases in $\sigma$ and thus $\widetilde{a}^{*}$ decreases in $\sigma$.

Finally, that $\widetilde{a}^{*}$ decreases in $\alpha$ and increases in $\beta$ can be directly inferred from \eqref{astar}. \end{proof}

\subsection{Proof of Corollary \ref{corollary 2}}
\begin{proof}
We take the first and second derivatives of $\widetilde{V}(x)$, namely,
\begin{align}
\widetilde{V}'(x)=\frac{\alpha}{\rho-\mu}\left(1-\left(\frac{\widetilde{a}^{*}}{x}\right)^{1-k_1}\right)  \quad \text{and } \quad  \widetilde{V}''(x)=\frac{\alpha(1-k_1)}{\rho-\mu}(\widetilde{a}^{*})^{1-k_1}x^{k_1-2}\,, \quad \forall x>\widetilde{a}^{*}.
\end{align} In turn, $k_1<0$ implies $\widetilde{V}'(x)>0$ and $\widetilde{V}''(x)>0$ for all $x>\widetilde{a}^{*}$. In addition, when $x$ is sufficient large, the first part of $\widetilde{V}(x)$ is negligible and its sensitivity with respect to the  parameters can be deduced   from \eqref{solution tilde V}.
\end{proof}

\subsection{Proof of Proposition \ref{Proposition 2}}

To prove Proposition \ref{Proposition 2}, we first establish the following lemmas.
\begin{lemma}
\label{lemma 1}
Let $H:\mathbb{R}_+\to\mathbb{R}$ such that
\begin{equation}
H(x)=\frac{\alpha(k_1-p_1)}{k_1(\rho-\mu)}(\widetilde{a}^{*})^{1-k_{1}}x^{k_{1}}+\frac{\rho+\alpha \lambda_{2}-\mu}{\rho+\lambda_{2}-\mu}\frac{p_{1}-1}{\rho-\mu}x-\frac{p_{1}(\beta \lambda_{2}+\rho K)}{\rho(\rho+\lambda_{2})},
\end{equation}
then there exists unique $a^{*}\in[\widetilde{a}^{*},+\infty)$, such that $H(a^{*})=0$, if and only if $K \geq \beta$ and $\alpha_{0}\leq \alpha\leq 1$. If $\alpha=\alpha_{0}$, then $a^{*}=\widetilde{a}^{*}$.
\end{lemma}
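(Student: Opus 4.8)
The plan is to treat $H$ as a smooth function on $(0,\infty)$ and reduce the entire statement to a monotonicity analysis together with the sign of the single number $H(\widetilde{a}^{*})$. First I would record the signs of the structural constants. Since $\lambda_2>0$ and $h_1(\lambda)$ is strictly decreasing in $\lambda$, the two negative roots obey $p_1<k_1<0$, so $k_1-p_1>0$; and $\rho>\mu$ gives $\rho-\mu>0$, $\rho+\lambda_2-\mu>0$, $p_1-1<0$. Reading off the coefficients of $H$ with these signs, the $x^{k_1}$ term and the linear term both carry negative coefficients while the constant term is positive; in particular $H(x)\to-\infty$ both as $x\to 0^+$ (through the $x^{k_1}$ term) and as $x\to+\infty$ (through the linear term).

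Next I would show that $H$ is \emph{strictly decreasing} on $[\widetilde{a}^{*},+\infty)$, which is exactly what forces uniqueness. Differentiating, $H'(x)=\frac{\alpha(k_1-p_1)}{\rho-\mu}(\widetilde{a}^{*})^{1-k_1}x^{k_1-1}+\frac{\rho+\alpha\lambda_2-\mu}{\rho+\lambda_2-\mu}\frac{p_1-1}{\rho-\mu}$, whose first term is positive and strictly decreasing in $x$ (as $k_1-1<0$) and whose second term is a negative constant; hence $H'$ is strictly decreasing, and it suffices to check $H'(\widetilde{a}^{*})\le 0$. At $x=\widetilde{a}^{*}$ the power cancels, leaving a quantity that is affine in $\alpha$; I would verify it is negative at the two endpoints $\alpha=0$ (where it reduces to $\frac{p_1-1}{\rho+\lambda_2-\mu}<0$) and $\alpha=1$ (where it collapses to $\frac{k_1-1}{\rho-\mu}<0$), so by affineness $H'(\widetilde{a}^{*})<0$ for every $\alpha\in[0,1]$. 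Then $H'(x)<H'(\widetilde{a}^{*})<0$ for all $x>\widetilde{a}^{*}$, so $H$ is strictly decreasing there. Combined with $H\to-\infty$, continuity and monotonicity give: a necessarily unique root $a^{*}\in[\widetilde{a}^{*},+\infty)$ exists if and only if $H(\widetilde{a}^{*})\ge 0$, with $a^{*}=\widetilde{a}^{*}$ exactly when $H(\widetilde{a}^{*})=0$.

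It then remains to translate $H(\widetilde{a}^{*})\ge 0$ into the stated conditions, which is the main obstacle. Substituting \eqref{astar} into $H$, all the $\alpha$-dependence enters only through the factor $\frac{\rho+\alpha\lambda_2-\mu}{\alpha}=\frac{\rho-\mu}{\alpha}+\lambda_2$, which is strictly decreasing in $\alpha$; since its prefactor is negative, $H(\widetilde{a}^{*})$ is strictly increasing in $\alpha$ with $H(\widetilde{a}^{*})\to-\infty$ as $\alpha\to 0^+$, so its sign is controlled by one threshold. The decisive computation is to evaluate $H(\widetilde{a}^{*})$ at $\alpha=1$, where after the cancellations it collapses to the clean expression $\frac{p_1(\beta-K)}{\rho+\lambda_2}$; because $p_1<0$ this is negative exactly when $K<\beta$ and nonnegative exactly when $K\ge\beta$. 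Hence if $K<\beta$ then $H(\widetilde{a}^{*})<0$ for all $\alpha\in(0,1]$ and no root lies in $[\widetilde{a}^{*},+\infty)$, while if $K\ge\beta$ the increasing map $\alpha\mapsto H(\widetilde{a}^{*})$ has a unique zero, and solving $H(\widetilde{a}^{*})=0$ for $\alpha$ yields precisely $\alpha_0$ in \eqref{alpha0} (with $\alpha_0=1$ when $K=\beta$). Therefore $H(\widetilde{a}^{*})\ge 0$ if and only if $K\ge\beta$ and $\alpha_0\le\alpha\le 1$, and $\alpha=\alpha_0$ gives $H(\widetilde{a}^{*})=0$, hence $a^{*}=\widetilde{a}^{*}$, completing the equivalence.
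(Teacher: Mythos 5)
Your proof is correct and follows essentially the same route as the paper's: reduce existence and uniqueness to the sign of $H(\widetilde{a}^{*})$ via strict monotonicity of $H$ on $[\widetilde{a}^{*},+\infty)$, then analyze $H(\widetilde{a}^{*})$ as a strictly increasing function of $\alpha$ whose value at $\alpha=1$ collapses to $p_{1}(\beta-K)/(\rho+\lambda_{2})$, yielding the case split on $K\geq\beta$ and the threshold $\alpha_{0}$. The only cosmetic difference is in how monotonicity past $\widetilde{a}^{*}$ is certified: you check $H'(\widetilde{a}^{*})<0$ by an affine-in-$\alpha$ endpoint evaluation, whereas the paper uses $H''<0$ to locate the maximizer $a^{*}_{0}$ of the unimodal $H$ and shows $a^{*}_{0}<\widetilde{a}^{*}$ --- the same fact in different clothing.
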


\begin{proof}
We take the first and second derivatives of $H(x)$,

\begin{align}
H'(x)&=\frac{\alpha(k_1-p_1)}{\rho-\mu}(\widetilde{a}^{*})^{1-k_{1}}x^{k_{1}-1}+\frac{\rho+\alpha \lambda_{2}-\mu}{\rho+\lambda_{2}-\mu}\frac{p_{1}-1}{\rho-\mu}, \\
H''(x)&=\frac{\alpha(k_1-p_1)(k_{1}-1)}{\rho-\mu}(\widetilde{a}^{*})^{1-k_{1}}x^{k_{1}-2}.
\end{align}

Notice $\alpha>0$ and $p_{1}<k_{1}<0$, then $H''(x)<0$ for all $x>0$. This means $H'(x)$ is a decreasing function with respect to $x$. By inspection, we have
\begin{equation}
H'(x)
\begin{cases}
>0 &\mbox{if $0<x<{a}^{*}_{0}$,}\\
=0 &\mbox{if $x={a}^{*}_{0}$,}\\
<0 &\mbox{if $x>{a}^{*}_{0}$,}\\
\end{cases} \quad \text{ with} ~~ {a}^{*}_{0}=\widetilde{a}^{*}\left(\frac{1-p_{1}}{\alpha(k_{1}-p_{1})}\frac{\rho+\alpha\lambda_{2}-\mu}{\rho+\lambda_{2}-\mu}\right)^{\frac{1}{k_{1}-1}}\,.
\end{equation}
The   derivative means that $H(x)$ is unimodal and maximized at ${a}^{*}_{0}$. Moreover, we have $\lim_{x\to 0} H(x)=\lim_{x\to +\infty}H(x)=-\infty$. Note that   $\left(\frac{1-p_{1}}{\alpha(k_{1}-p_{1})}\frac{\rho+\alpha\lambda_{2}-\mu}{\rho+\lambda_{2}-\mu}\right)$ is a decreasing function of $\alpha~(0<\alpha\leq1)$. When $\alpha=1$, $\left(\frac{1-p_{1}}{\alpha(k_{1}-p_{1})}\frac{\rho+\alpha\lambda_{2}-\mu}{\rho+\lambda_{2}-\mu}\right)>1$. Therefore, we see that  ${a}^{*}_{0}<\widetilde{a}^{*}$ for all $0<\alpha\leq 1$.

There exists a unique $a^{*}$ such that $H(a^{*})=0$ and $a^{*}\geq\widetilde{a}^{*}$ if and only if $H(\widetilde{a}^{*})\geq0$. To see this, if there is an $a^{*}\geq\widetilde{a}^{*}$ such that $H(a^{*})=0$, then $H(\widetilde{a}^{*})\geq0$, because $H(x)$ is a decreasing function when $x>{a}^{*}_{0}$ and $a^{*}\geq\widetilde{a}^{*}>{a}^{*}_{0}$. On the other hand, if $H(\widetilde{a}^{*})\geq0$, the existence and uniqueness of $a^{*}$ comes from the fact that $H(x)$ is continuous and decreasing for  $x>\widetilde{a}^{*}>{a}^{*}_{0}$, along with  $\lim_{x\to +\infty}H(x)=-\infty$.

Now $H(\widetilde{a}^{*})$ is given by
\begin{equation}
H(\widetilde{a}^{*})=\frac{\beta(p_{1}-k_{1})}{\rho(1-k_{1})}+\frac{k_{1}\beta(\rho+\alpha \lambda_{2}-\mu)(1-p_{1})}{\rho\alpha(\rho+\lambda_{2}-\mu)(1-k_{1})}-\frac{p_{1}(\beta \lambda_{2}+\rho K)}{\rho(\rho+\lambda_{2})}.
\end{equation}

It's easy to see that  $H(\widetilde{a}^{*})$ is increasing  in $\alpha~(0\leq\alpha\leq1)$, holding other parameters fixed. When $\alpha=1$, $H(\widetilde{a}^{*})|_{\alpha=1}=\frac{p_{1}(\beta-K)}{(\lambda_{2}+\rho)}$, which means $H(\widetilde{a}^{*})\leq\frac{p_{1}(\beta-K)}{(\lambda_{2}+\rho)}$ for all $0\leq\alpha\leq 1$.\\
$(1)$ If $K<\beta$, then $H(\widetilde{a}^{*})<0$, and there is not an $a^{*}\geq\widetilde{a}^{*}$ such that $H(a^{*})=0$.\\
$(2)$ If $K \geq \beta$, then
\begin{equation}
H(\widetilde{a}^{*})
\begin{cases}
>0 &\mbox{if $\alpha>\alpha_{0}$,}\\
=0 &\mbox{if $\alpha=\alpha_{0}$,}\\
<0 &\mbox{if $\alpha<\alpha_{0}$,}\\
\end{cases}\quad \text{ with } ~~ \alpha_{0}=\left(1-\frac{\rho p_{1}(k_1-1)(\rho+\lambda_2-\mu)}{k_{1}(p_{1}-1)(\rho-\mu)(\rho+\lambda_{2})}(1-\frac{K}{\beta})\right)^{-1}\,.
\end{equation}

From cases  $(1)$ and $(2)$ above, we conclude that  $K \geq \beta$ and $\alpha_{0}\leq \alpha\leq 1$ iff $H(\widetilde{a}^{*})\geq0$ iff there exists a unique $a^{*}$ such that $H(a^{*})=0$ and $a^{*}\geq\widetilde{a}^{*}$. Moreover, $a^{*}=\widetilde{a}^{*}$ iff  $K\geq \beta$ and $\alpha=\alpha_{0}$, due to  the uniqueness of $a^{*}$.
\end{proof}

%% lemma 2

\begin{lemma}
\label{lemma 2}
Let $M:\mathbb{R}_+\to\mathbb{R}$ such that
\begin{equation}
M(x)=\frac{(p_{1}-k_{1})\rho(\rho+\lambda_{2}-\mu)+k_{1}\mu\lambda_{2}(1-p_{1})}{(1-k_{1})\rho(\rho+\lambda_{2})(\rho+\lambda_{2}-\mu)}\beta
(\widetilde{a}^{*})^{-p_2}x^{p_2}+\frac{p_{1}-1}{\rho+\lambda_{2}-\mu}x-\frac{p_{1}K}{\rho+\lambda_{2}},
\end{equation}
then there exists unique $a^{*}\in (0,\widetilde{a}^{*})$, such that $M(a^{*})=0$, if and only if $K < \beta$, or $K \geq \beta$ and $0\leq\alpha<\alpha_{0}$.
\end{lemma}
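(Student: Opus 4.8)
The plan is to treat $M$ exactly as $H$ was treated in Lemma~\ref{lemma 1}, exploiting that its unique global zero falls below $\widetilde{a}^{*}$ precisely on the complementary parameter region. First I would record the orderings of the exponents: from \eqref{general} one has $p_{1}=h_{1}(\lambda_{2})<k_{1}=h_{1}(0)<0$ and $p_{2}=h_{2}(\lambda_{2})>1$, the latter because the quadratic associated with \eqref{general} is negative at $y=1$ when $\rho>\mu$. With these signs the coefficient of $x^{p_{2}}$ in $M$ is negative: its numerator $(p_{1}-k_{1})\rho(\rho+\lambda_{2}-\mu)+k_{1}\mu\lambda_{2}(1-p_{1})$ is a sum of two negative terms (using $p_{1}-k_{1}<0$, $\rho+\lambda_{2}-\mu>0$, $k_{1}<0$, $1-p_{1}>0$), while the denominator together with the factor $\beta(\widetilde{a}^{*})^{-p_{2}}$ is positive.

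Next I would show $M$ is strictly decreasing on $(0,+\infty)$. Writing $A$ for the negative coefficient above, differentiation gives $M'(x)=A p_{2}x^{p_{2}-1}+\frac{p_{1}-1}{\rho+\lambda_{2}-\mu}$; since $p_{2}>1$ the first summand is negative for $x>0$, and $\frac{p_{1}-1}{\rho+\lambda_{2}-\mu}<0$ because $p_{1}<0<1$ and $\rho+\lambda_{2}-\mu>0$. Combined with the boundary values $M(0^{+})=-\frac{p_{1}K}{\rho+\lambda_{2}}>0$ and $\lim_{x\to+\infty}M(x)=-\infty$ (dominated by $A x^{p_{2}}$), this produces a unique global root $a^{*}\in(0,+\infty)$. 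Monotonicity then reduces the entire claim to a statement about one scalar: $a^{*}\in(0,\widetilde{a}^{*})$ holds if and only if $M(\widetilde{a}^{*})<0$.

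The crux is the algebraic identity $M(\widetilde{a}^{*})=H(\widetilde{a}^{*})$, with $H$ the function from Lemma~\ref{lemma 1}. To verify it, I would evaluate $M$ at $x=\widetilde{a}^{*}$, where the factor $(\widetilde{a}^{*})^{-p_{2}}x^{p_{2}}$ collapses to $1$, and substitute $\widetilde{a}^{*}=\frac{\rho-\mu}{\rho}\frac{k_{1}}{k_{1}-1}\frac{\beta}{\alpha}$ from \eqref{astar} into the linear term. Forming $M(\widetilde{a}^{*})-H(\widetilde{a}^{*})$ over the common denominator $(1-k_{1})\rho(\rho+\lambda_{2})(\rho+\lambda_{2}-\mu)$, the numerator reduces to $\beta\lambda_{2}$ times a bracket; collecting the coefficient of $(\rho+\lambda_{2}-\mu)$ via $-(p_{1}-k_{1})+p_{1}(1-k_{1})=k_{1}(1-p_{1})$ and using $\mu-(\rho+\lambda_{2})=-(\rho+\lambda_{2}-\mu)$, this bracket cancels identically to zero. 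Granting the identity, Lemma~\ref{lemma 1}---which establishes $H(\widetilde{a}^{*})\ge 0$ exactly when $K\ge\beta$ and $\alpha_{0}\le\alpha\le1$---yields $M(\widetilde{a}^{*})<0$ precisely on the complementary set $\{K<\beta\}\cup\{K\ge\beta,\ 0\le\alpha<\alpha_{0}\}$, which is the asserted condition; the boundary case $M(\widetilde{a}^{*})=0$ corresponds to $a^{*}=\widetilde{a}^{*}$ and is already accounted for by Lemma~\ref{lemma 1}.

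I expect the main obstacle to be the cancellation in the third step: verifying $M(\widetilde{a}^{*})=H(\widetilde{a}^{*})$ requires carrying the common denominator through several partial cancellations, and the vanishing of the resulting bracket is not visible term by term. Everything else is a routine sign and monotonicity analysis once the ordering $p_{1}<k_{1}<0<1<p_{2}$ is in place.
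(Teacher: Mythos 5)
Your proposal is correct and follows essentially the same route as the paper's proof: establish $M'<0$ on $(0,\infty)$ from the sign of the $x^{p_2}$-coefficient and $p_1<k_1<0<1<p_2$, use $M(0^+)>0$ and $M(x)\to-\infty$ to get a unique root, reduce the claim to the sign of $M(\widetilde{a}^{*})$, and invoke the identity $M(\widetilde{a}^{*})=H(\widetilde{a}^{*})$ together with the case analysis already done in Lemma~\ref{lemma 1}. The only difference is one of detail: you spell out the sign of the leading coefficient and the algebraic cancellation behind $M(\widetilde{a}^{*})=H(\widetilde{a}^{*})$, which the paper dismisses as ``easy to check,'' and both your verification and your handling of the boundary case $\alpha=\alpha_0$ (root exactly at $\widetilde{a}^{*}$, hence excluded from the open interval) are sound.
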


\begin{proof}
We take the first derivative of $M(x)$,
\begin{equation}
M'(x)=\frac{(p_{1}-k_{1})\rho(\rho+\lambda_{2}-\mu)+k_{1}\mu\lambda_{2}(1-p_{1})}{(1-k_{1})\rho(\rho+\lambda_{2})(\rho+\lambda_{2}-\mu)}\beta (\widetilde{a}^{*})^{-p_2}p_{2}x^{p_2-1}+\frac{p_{1}-1}{\rho+\lambda_{2}-\mu}.
\end{equation}

Conditions $\mu>0$, $p_1<k_1<0$ and $p_2>1$ indicate $M'(x)<0$ for all $x>0$ and thus $M(x)$ is a decreasing function in $x$. $M(0)=-\frac{p_{1}K}{\rho+\lambda_{2}} > 0$ and $\lim_{x\to\infty}M(x)=-\infty$ imply that there exists unique $a^{*}$ such that $M(a^{*})=0$.

Now we need to check whether $\widetilde{a}^{*}>a^{*}$ or not. Note $\widetilde{a}^{*}>a^{*}$ if and only if $M(\widetilde{a}^{*})<0$.
\begin{align}
M(\widetilde{a}^{*})=&\frac{(p_{1}-k_{1})\rho(\rho+\lambda_{2}-\mu)+k_{1}\mu\lambda_{2}(1-p_{1})}{(1-k_{1})\rho(\rho+\lambda_{2})(\rho+\lambda_{2}-\mu)}\beta
+\frac{p_{1}-1}{\rho+\lambda_{2}-\mu}\widetilde{a}^{*}-\frac{p_{1}K}{\rho+\lambda_{2}} \nonumber \\
=&\frac{(p_{1}-k_{1})\rho(\rho+\lambda_{2}-\mu)+k_{1}\mu\lambda_{2}(1-p_{1})}{(1-k_{1})\rho(\rho+\lambda_{2})(\rho+\lambda_{2}-\mu)}\beta+\frac{p_{1}-1}{\rho+\lambda_{2}
-\mu}\frac{\rho-\mu}{\rho}\frac{k_{1}}{k_{1}-1}\frac{\beta}{\alpha}-\frac{p_{1}K}{\rho+\lambda_{2}}.
\end{align}

It's easy to check that $M(\widetilde{a}^{*})=H(\widetilde{a}^{*})$ (in the proof of Lemma \ref{lemma 1}). Follow the last part of the proof in Lemma \ref{lemma 1}, we immediately obtain the result of Lemma \ref{lemma 2}.
\end{proof}

Now let's focus on the proof of Proposition \ref{Proposition 2}.

\begin{proof}
We consider a candidate stopping time $\tau_{a}=\inf\{t\geq 0: X_{t}\leq a^{*}\}$, where $a^{*}>0$. We split the problem into two cases: $a^{*}\geq\widetilde{a}^{*}$ and $a^{*}<\widetilde{a}^{*}$.

(I) We first assume $a^{*}\geq\widetilde{a}^{*}$. In the stopping region $\{x<a^{*}\}$, $V(x) = 0$ from the definition $\tau_{a}$ and \eqref{expression_V}. In the continuation region $\{x\geq a^{*}\}$, $V(x)$ satisfies the following ODE:
\begin{equation}
\label{ODE V 1}
-(\rho+\lambda_{2})V(x)+\mu xV'(x)+\frac{\sigma^{2}x^{2}}{2}V''(x)+f(x)+\lambda_{2}\widetilde{V}(x)=0,
\end{equation}
with boundary conditions:
\begin{align}
\label{boundary condition V1}
V(a^{*})=0,  \,\,\,\,\,\, V'(a^{*})=0, \,\,\,\,\,\, \lim_{x\to\infty}V'(x)<+\infty.
\end{align}

The ODE and last condition in \eqref{boundary condition V1} indicate that the general solution is given by $C_{1}x^{p_{1}}+v_{1}(x)$, where $C_{1}$ is a constant, $p_{1}$ is the negative root of the associated  quadratic equation, and $v_{1}(x)$ is a  particular solution of \eqref{ODE V 1}:
\begin{equation}
\label{v1(x)}
v_{1}(x)=-\frac{\alpha}{k_{1}(\rho-\mu)}(\widetilde{a}^{*})^{1-k_{1}}x^{k_{1}}+\frac{\rho+\alpha \lambda_{2}-\mu}{\rho+\lambda_{2}-\mu}\frac{1}{\rho-\mu}x-\frac{\beta \lambda_{2}+\rho K}{\rho(\rho+\lambda_{2})}.
\end{equation}

To determine $a^{*}$ and $C_{1}$, we utilize boundary conditions \eqref{boundary condition V1} to get the following system of equations:
\begin{align}
\label{system equations V case1 1}
C_{1}(a^{*})^{p_{1}}+v_{1}(a^{*})&=0, \\
\label{system equations V case1 2}
C_{1}p_{1}(a^{*})^{p_{1}-1}+v_{1}'(a^{*})&=0.
\end{align}
The above equations yield \eqref{a1}. From Lemma \ref{lemma 1}, if $K \geq \beta$ and $\alpha_{0}\leq\alpha\leq 1$, then there exists unique $a^{*}\geq\widetilde{a}^{*}$,
satisfying \eqref{a1}. We solve $a^{*}$ numerically and plug it in \eqref{system equations V case1 1} and \eqref{system equations V case1 2}, then we can get \eqref{C1} and thus \eqref{solution V 1}.

(II) Next we assume $a^{*}<\widetilde{a}^{*}$. Also from the definition $\tau_{a}$ and \eqref{expression_V}, $V(x) = 0$ in the stopping region $\{x<a^{*}\}$. In the continuation region $\{x\geq a^{*}\}$, $V(x)$ satisfies the following ODE:
\begin{equation}
-(\rho+\lambda_{2})V(x)+\mu xV'(x)+\frac{\sigma^{2}x^{2}}{2}V''(x)+f(x)+\lambda_{2}\widetilde{V}(x)=0,
\end{equation}
with continuous fit and smooth-pasting conditions:
\begin{align}
V^{(2)}(a^{*})&=0,  \,\,\,\,\,\,
V'^{(2)}(a^{*})=0, \,\,\,\,\,\,
V^{(1)}(\widetilde{a}^{*})=V^{(2)}(\widetilde{a}^{*}),  \\
\label{boundary condition V2}
V'^{(1)}(\widetilde{a}^{*})&=V'^{(2)}(\widetilde{a}^{*}),  \,\,\,\,\,\,
\lim_{x\to\infty}V'^{(1)}(x)<+\infty.
\end{align}

Since $\widetilde{V}(x)$ is a piecewise function, the general solution is also a piecewise function,
\begin{equation}
V(x)=
\begin{cases}
V^{(1)}(x) &\mbox{if $x>\widetilde{a}^{*}$,}\\
V^{(2)}(x) &\mbox{if $a^{*}<x<\widetilde{a}^{*}$,}\\
0 &\mbox{if $0<x<{a}^{*}$,}\\
\end{cases}
\end{equation}
where
\begin{align}
V^{(1)}(x) &= C_{2}x^{p_{1}}+v_{1}(x), \label{VV1}\\
V^{(2)}(x) &= C_{3}x^{p_{1}}+C_{4}x^{p_{2}}+v_{2}(x).\label{VV2}
\end{align}
The constants $C_{2}$ to $C_{4}$ are     determined by \eqref{boundary condition V2}, and  $p_{1}$ and $p_{2}$ are the two roots of  the quadratic equation: $-(\rho+\lambda_{2})+\mu p+\sigma^{2}p(p-1)/2=0$. In \eqref{VV1} and \eqref{VV2},   the particular solution $v_{1}(x)$ is given by \eqref{v1(x)}, and
\begin{equation}
v_{2}(x)=\frac{1}{\rho+\lambda_{2}-\mu}x-\frac{K}{\rho+\lambda_{2}}.
\end{equation}
To determine $a^{*}$,  $C_{2}$, $C_3$, and $C_{4}$, we refer to \eqref{boundary condition V2} to get the following system of equations:
\begin{align}
\label{system equations V case2 1}
C_{3}(a^{*})^{p_{1}}+C_{4}(a^{*})^{p_{2}}+v_{2}(a^{*})&=0, \\
\label{system equations V case2 2}
C_{3}p_{1}(a^{*})^{p_{1}-1}+C_{4}p_{2}(a^{*})^{p_{2}-1}+v_{2}'(a^{*})&=0, \\
\label{system equations V case2 3}
C_{3}(\widetilde{a}^{*})^{p_{1}}+C_{4}(\widetilde{a}^{*})^{p_{2}}+v_{2}(\widetilde{a}^{*})&=C_{2}(\widetilde{a}^{*})^{p_{1}}+v_{1}(\widetilde{a}^{*}), \\
\label{system equations V case2 4}
C_{3}p_{1}(\widetilde{a}^{*})^{p_{1}-1}+C_{4}p_{2}(\widetilde{a}^{*})^{p_{2}-1}+v_{2}'(\widetilde{a}^{*})&=C_{2}p_{1}(\widetilde{a}^{*})^{p_{1}-1}+v_{1}'(\widetilde{a}^{*}).
\end{align}

Solving the above equations, we obtain $C_2$ to $C_4$ as in \eqref{C3}-\eqref{C6}, and $a^*$ satisfies \eqref{a2}. From Lemma \ref{lemma 2}, if $K < \beta$, or $K \geq \beta$ and $0\leq\alpha<\alpha_{0}$, then there exists a unique  $a^{*}<\widetilde{a}^{*}$
that satisfies  \eqref{a2}.   \end{proof}

\subsection{Proof of Corollary \ref{corollary 3}}
\begin{proof} First, recall that  $k_1, p_1<0$ and $\rho>\mu$, which after some algebra implies that  $0<\alpha_0\leq 1$ (see \eqref{alpha0}).  We rewrite $\alpha_0$ in  the following form
\begin{align}
\alpha_{0}&=\frac{k_{1}(p_{1}-1)(\rho-\mu)(\rho+\lambda_{2})}{k_{1}(p_{1}-1)(\rho-\mu)(\rho+\lambda_{2})-\rho p_{1}(k_{1}-1)(\rho+\lambda_{2}-\mu)(1-\frac{K}{\beta})} \\
&=\frac{\lambda_2S_1(p_1)+S_2(p_1)}{\lambda_2S_3(p_1)+S_4(p_1)},
\end{align}
where
\begin{equation}
\label{equations_of_p1}
\begin{array}{ll}
S_1(p_1)&=k_1(\rho-\mu)(p_1-1),\\
S_2(p_1)&=\rho S_1(p_1),\\
\end{array}
\hspace{2em}
\begin{array}{ll}
S_3(p_1)&=S_1(p_1)-\rho(k_1-1)(1-\frac{K}{\beta})p_1,\\
S_4(p_1)&=S_2(p_1)+(\rho-\mu)(S_3(p_1)-S_1(p_1)),\\
\end{array}
\end{equation}
and $\lambda_2S_3(p_1)+S_4(p_1)>0$ under the condition $K>\beta$. Since $p_1$ depends on $\lambda_2$, we differentiate to  get
\begin{equation}
\label{derivatives_of_p1}
\begin{array}{ll}
\frac{\partial S_1(p_1)}{\partial\lambda_2}&=k_1(\rho-\mu)\frac{\partial p_1}{\lambda_2},\\
\\
\frac{\partial S_2(p_1)}{\partial\lambda_2}&=\rho\frac{\partial S_1(p_1)}{\partial\lambda_2},\\
\end{array}
\begin{array}{ll}
\frac{\partial S_3(p_1)}{\partial\lambda_2}&=\frac{\partial S_1(p_1)}{\partial\lambda_2}-\rho(k_1-1)(1-\frac{K}{\beta})\frac{\partial p_1}{\lambda_2},\\
\\
\frac{\partial S_4(p_1)}{\partial\lambda_2}&=\frac{\partial S_2(p_1)}{\partial\lambda_2}+(\rho-\mu)(\frac{\partial S_3(p_1)}{\partial\lambda_2}-\frac{\partial S_1(p_1)}{\partial\lambda_2}),\\
\end{array}
\end{equation}
where
\begin{equation}
\label{p1_with_respect_to_lambda2}
\frac{\partial p_1}{\lambda_2}=-\left((\frac{\sigma^{2}}{2}-\mu)^{2}+2\sigma^{2}(\rho+\lambda_{2})\right)^{-\frac{1}{2}} <0.
\end{equation}

In turn,  the partial derivative of $\alpha_0$ in $\lambda_2$ is given by
\begin{equation}
\frac{\partial\alpha_0}{\partial\lambda_2}=\left((\lambda_2\frac{\partial S_1}{\partial\lambda_2}+S_1+\frac{\partial S_2}{\partial\lambda_2})(\lambda_2S_3+S_4)-(\lambda_2\frac{\partial S_3}{\partial\lambda_2}+S_3+\frac{\partial S_4}{\partial\lambda_2})(\lambda_2S_1+S_2)\right)\frac{1}{\left(\lambda_2S_3+S_4\right)^{2}},
\end{equation}where $S_i \equiv S_i(p_1)$, for $1\leq i \leq 4$.
Since $(\lambda_2S_3+S_4)^{2}>0$, we omit last term in the following calculation,
\begin{equation}
\frac{\partial\alpha_0}{\partial\lambda_2}=(\frac{\partial S_1}{\partial\lambda_2}S_3-\frac{\partial S_3}{\partial\lambda_2}S_1)\lambda^{2}_{2}+(\frac{\partial S_1}{\partial\lambda_2}S_4-\frac{\partial S_4}{\partial\lambda_2}S_1+\frac{\partial S_2}{\partial\lambda_2}S_3-\frac{\partial S_3}{\partial\lambda_2}S_2)\lambda_2+\frac{\partial S_2}{\partial\lambda_2}S_4-\frac{\partial S_4}{\partial\lambda_2}S_2+S_1S_4-S_2S_3.
\end{equation}
With \eqref{equations_of_p1} and \eqref{derivatives_of_p1}, we get the final result,
\begin{equation}
\label{alpha0_derivative}
\frac{\partial\alpha_0}{\partial\lambda_2}=(1-\frac{K}{\beta})k_1\rho(1-k_1)(\rho-\mu)\left((\lambda_2+\rho)(\lambda_2+\rho-\mu)\frac{\partial p_1}{\lambda_2}+p_1\mu(1-p_1)\right).
\end{equation}
From \eqref{p1_with_respect_to_lambda2}, we see that  $\frac{\partial\alpha_0}{\partial\lambda_2}$ in \eqref{alpha0_derivative} is strictly negative for all $\lambda_2>0$ under the condition $K>\beta$.

Since $\alpha_0$ is decreasing in $\lambda_2$ and bounded below by  zero, $\alpha_0$ must converge. Notice that $S_i(p_1) (1\leq i \leq 4)$ are all linear function of $p_1$. Hence, we have
\begin{align}
\lim_{\lambda_{2}\to+\infty}\alpha_{0}
&=\lim_{\lambda_{2}\to+\infty}\frac{\lambda_2S_1(p_1)+S_2(p_1)}{\lambda_2S_3(p_1)+S_4(p_1)}\\
%&=\lim_{p_1\to-\infty}\frac{S_1(p_1)}{S_3(p_1)}\\
&=\lim_{p_1\to-\infty}\left(1+\frac{p_1\rho(1-k_1)}{(p_1-1)k_1(\rho-\mu)}(1-\frac{K}{\beta})\right)^{-1}\\
&=\left(1+\frac{\rho(1-k_1)}{k_1(\rho-\mu)}(1-\frac{K}{\beta})\right)^{-1}.
\end{align}
This validates \eqref{alpha0_limit}, and  $0<\alpha_0<1$ implies that  $0<\alpha^{\infty}_0\leq 1$. The equality $\alpha^{\infty}_0 =1$  iff $K=\beta$ by \eqref{alpha0_limit}.\end{proof}

\subsection{Proof of Proposition \ref{Proposition 3}}
\begin{proof}
We consider two candidate stopping times $\tau_{e}=\inf\{t\geq 0: X_{t}\geq e^{*}\}$ and
$\tau_{c}=\inf\{t\geq 0: X_{t}\leq c^{*}\}$, where $e^{*}\ge c^{*}>0$. In the   entry region $\{x>e^{*}\}$, the firm enters the market and thus $\psi(x)=V(x)$. In the cancellation region $\{x<c^{*}\}$, we have $\psi(x)=0$. In the  continuation region, $c^{*}\leq x\leq e^{*}$, where the firm waits for entry or cancellation, the firm's value function $\psi(x)$ satisfies the  ODE:
\begin{equation}
\label{ODE psi}
-(\rho+\lambda_{1})\psi(x)+\mu x\psi'(x)+\frac{\sigma^{2}x^{2}}{2}\psi''(x)-c(x)=0,
\end{equation}
with the boundary conditions:
\begin{align}
\label{boundary condition psi}
\psi(c^{*})=0, \,\,\,\,\,\,
\psi'(c^{*})=0, \,\,\,\,\,\,
\psi(e^{*})=V(e^{*}), \,\,\,\,\,\,
\psi'(e^{*})=V'(e^{*}).
\end{align}
The general solution of the above ODE is given by $\psi(x) = D_{1}x^{q_{1}}+D_{2}x^{q_{2}}+c_{0}(x)$, where  $q_{1}$ and $q_{2}$ are roots of  the quadratic equation: $-(\rho+\lambda_{1})+\mu q+{\sigma^{2}}q(q-1)/2=0$,
and $c_{0}(x)$ is a linear particular solution of \eqref{ODE psi}:
\begin{equation}
c_{0}(x)=-\frac{a}{\rho+\lambda_{1}-\mu}x-\frac{b}{\rho+\lambda_{1}}.
\end{equation} To determine $c^{*}$ and $e^{*}$, along with the constants $D_{1}$ and $D_{2}$, we  apply the conditions in  \eqref{boundary condition psi} to obtain the system of equations \eqref{ce1}-\eqref{ce4}.
\end{proof}

 \bibliographystyle{apa}
\bibliography{paper}
 
\end{document}